\theoremstyle{plain}
\newtheorem{thm}{Theorem}[section]
\newtheorem{pro}[thm]{Proposition}
\theoremstyle{definition}
\newtheorem{defn}[thm]{Definition}
\theoremstyle{remark}
\newcommand{\Gtwo}{\ifmmode{{\rm G}_2}\else{${\rm G}_2$}\fi}
\date{\today}
\begin{document}

\title[]
 {Almost para-Hermitian and almost paracontact metric structures induced by natural Riemann extensions}

\author[C. L. Bejan]{Cornelia-Livia Bejan}

\address{
''Gh. Asachi'' Technical University of Iasi \\ Department of Mathematics \\ 700506 Iasi \\ Romania}
\email{bejanliv@yahoo.com}

\author[G. Nakova]{Galia Nakova}

\address{
University of Veliko Tarnovo "St. Cyril and St. Methodius" \\ Faculty of Mathematics and Informatics\\   Department of Algebra and Geometry
\\ 2 Teodosii Tarnovski Str. \\ Veliko Tarnovo 5003\\ Bulgaria}
\email{gnakova@gmail.com}

\subjclass{}

\keywords{Cotangent bundle, Natural Riemann extension, Almost para-Hermitian manifold, Almost paracontact metric manifold}

%\date{January 1, 2004}
%----------additions
%\dedicatory{To my boss}
%%% ----------------------------------------------------------------------

\begin{abstract}
In this paper we consider a manifold $(M,\nabla )$ with a symmetric linear connection $\nabla $ which induces on the cotangent bundle $T^*M$ of $M$ a semi-Riemannian metric $\overline g$ with a neutral signature. The metric $\overline g$ is called  natural Riemann extension and it is a generalization (made by M. Sekizawa and O. Kowalski) of the Riemann extension, introduced by E. K. Patterson and A. G. Walker (1952). We construct two almost para-Hermitian structures on $(T^*M,\overline g)$ which are almost para-K\"ahler or para-K\"ahler and prove that the defined almost para-complex structures are harmonic. On certain hypersurfaces of $T^*M$ we construct almost paracontact metric structures, induced by the obtained almost para-Hermitian structures. We determine the classes of the corresponding almost  paracontact metric manifolds according to the classification given by S. Zamkovoy and G. Nakova (2018). We obtain a necessary and sufficient condition the considered manifolds to be paracontact metric, K-paracontact metric or para-Sasakian.
\end{abstract}

\newcommand{\g}{\mathfrak{g}}
\newcommand{\s}{\mathfrak{S}}
\newcommand{\F}{\mathcal{F}}
\newcommand{\R}{\mathbb{R}}
\newcommand{\U}{\mathbb{U}}
\newcommand{\G}{\mathbb{G}}
\newcommand{\diag}{\mathrm{diag}}
\newcommand{\End}{\mathrm{End}}
\newcommand{\im}{\mathrm{Im}}
\newcommand{\id}{\mathrm{id}}
\newcommand{\Hom}{\mathrm{Hom}}

\newcommand{\Rad}{\mathrm{Rad}}
\newcommand{\rank}{\mathrm{rank}}
\newcommand{\const}{\mathrm{const}}
\newcommand{\tr}{{\rm tr}}
\newcommand{\ltr}{\mathrm{ltr}}
\newcommand{\codim}{\mathrm{codim}}
\newcommand{\Ker}{\mathrm{Ker}}

\newcommand{\thmref}[1]{Theorem~\ref{#1}}
\newcommand{\propref}[1]{Proposition~\ref{#1}}
\newcommand{\corref}[1]{Corollary~\ref{#1}}
\newcommand{\secref}[1]{\S\ref{#1}}
\newcommand{\lemref}[1]{Lemma~\ref{#1}}
\newcommand{\dfnref}[1]{Definition~\ref{#1}}
%\newcommand{\eqref}[1]{(\ref{#1})}

%\frenchspacing

\newcommand{\ee}{\end{equation}}
\newcommand{\be}[1]{\begin{equation}\label{#1}}

%%% ----------------------------------------------------------------------
\maketitle

\section{Introduction}\label{sec-1}
The geometry of an almost para-Hermitian manifold $(N,P,g)$ is determined by the action of the almost 
para-complex structure $P$ as an anti-isometry with respect to the semi-Riemannian metric $g$ in each tangent fibre. The metric $g$ is necessarily of neutral signature. A classification of the almost para-Hermitian manifolds is made by C.-L. Bejan in \cite{B}. The geometry of the almost paracontact metric manifolds is a natural extension of the geometry of the almost para-Hermitian manifolds to the odd dimensional case. Twelve basic classes of almost paracontact metric manifolds $(M,\varphi ,\xi ,\eta ,g)$ with respect to the covariant derivative of the structure tensor $\varphi $ is obtained by S. Zamkovoy and G. Nakova in \cite{ZN}. Moreover, in \cite{ZN} it is shown that 3-dimensional almost paracontact metric 
manifolds belong only to four basic classes from the classification and examples for each of these classes are constructed.
\par
Let $(M,\nabla )$ be an $n$-dimensional manifold endowed with a symmetric linear connection $\nabla $. Patterson and Walker defined in \cite{PW} a semi-Riemannian metric  on the cotangent bundle $T^*M$ of 
$(M,\nabla )$, called  Riemann extension. This metric is of neutral signature $(n,n)$ and it was generalized by M. Sekizawa and O. Kowalski in \cite{KS, S} to natural Riemann extension $\overline g$ which has the same signature. Recently, the metric $\overline g$ has been studied from different points of view. For instance, Bejan and  Kowalski characterized in \cite{BK} some harmonic functions on $(T^*M,\overline g)$. In 
\cite{B1} Bejan and Eken defined a canonical almost para-complex structure on $(T^*M,\overline g)$ and investigated its harmonicity with respect to $\overline g$. In \cite{B2} the authors constructed a family of hypersurfaces of $(T^*M,\overline g)$ which are Einstein manifolds with a positive scalar curvature.
\par
Our aim in the present work is to obtain new examples of almost para-Hermitian and almost paracontact metric  manifolds. The paper is organized as follows. In Sect. 2 we recall some notions and results about the cotangent bundle of a manifold and the lifting of objects from the base manifold to its cotangent bundle. In Sect. 3, motivated from the fact that the natural Riemann extension  $\overline g$ on $T^*M$ is of neutral signature, we construct two almost para-Hermitian structures $(P,\overline g)$ and $(P_1,\overline g_1)$ on $T^*M$, where $\overline g$ and $\overline g_1$ are proper and non-proper natural Riemann extension, respectively. We prove that in the case when $M$ is not flat (resp. $M$ is flat) both manifolds 
$(T^*M,P,\overline g)$ and $(T^*M,P_1,\overline g_1)$ are almost para-K\"ahler (resp. para-K\"ahler). Moreover, we establish that the defined almost para-complex structures $P$ and $P_1$ are harmonic with respect to $\overline g$ and $\overline g_1$, respectively. In Sect. 4 we study a family of non-degenerate hypersurfaces $\widetilde H_t$ of $(T^*M,P,\overline g)$. They are a generalization of the family $H_t$ of non-degenerate hypersurfaces of $(T^*M,\overline g)$, introduced in \cite{B2}. On a hypersurface 
$\widetilde H_t$  with a time-like unit normal vector field we define an almost paracontact metric structure
$(\varphi ,\overline \xi ,\eta ,g)$ induced from the almost para-Hermitian structure $(P,\overline g)$. We determine the classes to which belong the obtained almost paracontact metric manifolds $(\widetilde H_t,\varphi ,\overline \xi ,\eta ,g)$ and give a necessary and sufficient condition the considered manifolds to be paracontact metric. Also, we consider the almost paracontact metric manifolds $H_t$ and obtain a necessary and sufficient condition they to be  para-Sasakian or K-paracontact metric.

\section{Preliminaries}\label{sec-2}
Let $M$ be a connected smooth $n$-dimensional manifold ($n\geq 2$). The cotangent bundle $T^*M$ of $M$ consists of all pairs $(x,\omega )$, where $x\in M$ and $\omega \in T^*_xM$. Let $p : T^*M\longrightarrow M$, \, $p(x,\omega )=x$, \,  be the natural projection of $T^*M$ to $M$. Any local chart 
$(U;x^1,\ldots ,x^n)$ on $M$ induces a local chart $(p^{-1}(U);x^1,\ldots ,x^n,x^{1*},\ldots ,x^{n*})$ on $T^*M$, where for any $i=1,\ldots ,n$ the function $x^i\circ p$ on $p^{-1}(U)$ is identified with the function $x^i$ on $U$ and $x^{i*}=\omega _i=\omega \left(\left(\frac{\partial }{\partial x^i}\right)_x\right)$ at any point $(x,\omega )\in p^{-1}(U)$. The vectors $\{(\partial _1)_{(x,\omega )},\ldots ,
(\partial _n)_{(x,\omega )},(\partial _{1*})_{(x,\omega )},\ldots ,(\partial _{n*})_{(x,\omega )}\}$, where we put $\partial _i=\frac{\partial }{\partial x^i}$ and $\partial _{i*}=\frac{\partial }{\partial \omega ^i}$ \, $(i=1,\ldots ,n)$ \, form a basis of the tangent space $(T^*M)_{(x,\omega )}$ at each point $(x,\omega )\in T^*M$. The Liouville type vector field $W$ is globally defined vector field on  $T^*M$  which is expressed in local coordinates by
\[
W=\sum_{i=1}^{n}\omega _i\partial _{i*} .
\]
Everywhere here we will denote by $\mathcal{F}(M)$, $\chi (M)$ and $\Omega ^1(M)$ the set of all smooth real functions, vector fields and differential 1-forms on $M$, respectively.
\par Now, we recall the constructions of the vertical and complete lifts for which we refer to \cite{YI, YP}.
\par
The vertical lift $f^V$ on $T^*M$ of a function $f\in \mathcal{F}(M)$ is a function on $T^*M$ defined by 
$f^V=f\circ p$. The vertical lift $X^V$ on $T^*M$ of a vector field $X\in \chi (M)$ is a function  on 
$T^*M$ (called evaluation function) defined by 
\[X^V(x,\omega )=\omega (X_x) \, \, \text {or equivalently} \, \, X^V(x,\omega )=\omega _iX^i(x), \, \,
\text {where} \, \, X=X^i\partial _i .
\] 
In \cite{YP} it is shown that a vector field $U\in \chi (T^*M)$ is determined by its action on all evaluation functions. More precisely, the following proposition is valid:
\begin{pro}\label{Proposition 2.1}\cite{YP}
Let $U_1$ and $U_2$ be vector fields on $T^*M$. If $U_1(Z^V)=U_2(Z^V)$ holds for all $Z\in 
\chi (M)$, then  $U_1=U_2$.
\end{pro}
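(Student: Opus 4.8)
The plan is to use linearity: putting $U=U_1-U_2$, one has $U(Z^V)=0$ for all $Z\in\chi(M)$, and it suffices to prove $U=0$. Since vanishing of a vector field is a local property, I would fix a point of $T^*M$, choose an induced chart $(p^{-1}(U);x^1,\dots,x^n,\omega_1,\dots,\omega_n)$ around it, and write $U=A^i\partial_i+B_i\partial_{i*}$ with $A^i,B_i\in\mathcal{F}(p^{-1}(U))$. The goal is then $A^i\equiv B_i\equiv0$ on $p^{-1}(U)$.

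The computational heart is the identity, valid for any $Z=Z^j\partial_j\in\chi(M)$,
\[
U(Z^V)=A^i\,\omega_j\,\partial_i Z^j+B_i\,Z^i,
\]
which follows from $Z^V(x,\omega)=\omega_jZ^j(x)$ together with $\partial_i\omega_j=0$ and $\partial_{i*}\omega_j=\delta_i^j$. Now I would feed in suitable test fields. Given any point $q\in U$, choose $Z\in\chi(M)$ coinciding with the coordinate field $\partial_k$ on a neighbourhood of $q$ (a standard bump-function argument produces such a $Z$); then $Z^j$ is constant there, the first term of the identity vanishes over this neighbourhood, and $U(Z^V)=B_k$ there, whence $B_k=0$ at every point of $T^*M$ lying over $q$. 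Letting $q$ and $k$ vary gives $B_i\equiv0$ on $p^{-1}(U)$. Next, choosing $Z\in\chi(M)$ equal to $x^l\partial_k$ near $q$, so that $\partial_iZ^j=\delta_i^l\delta_k^j$ there, the identity specialises to $U(Z^V)=A^l\omega_k+B_kx^l$ over a neighbourhood of $q$; together with $B_k\equiv0$ this yields $A^l\omega_k\equiv0$ on $p^{-1}(U)$ for all indices $k,l$.

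Finally, to pass from $A^l\omega_k\equiv0$ to $A^l\equiv0$: on the set $\{\omega\neq0\}$, which is open and dense in $p^{-1}(U)$, one may at each point pick $k$ with $\omega_k\neq0$ and conclude $A^l=0$ there; since $A^l$ is smooth, it then vanishes on all of $p^{-1}(U)$. Hence $A^i\equiv B_i\equiv0$, so $U=0$ on the chart, and as the chart was arbitrary $U=0$ on $T^*M$, i.e.\ $U_1=U_2$. I do not anticipate a genuine obstacle here; the only thing requiring a little care is the passage from the locally defined coordinate fields $\partial_k$ and $x^l\partial_k$ to globally defined elements of $\chi(M)$, which is legitimate because $U(Z^V)$ at a point depends only on the germ of $Z$ at its image under $p$.
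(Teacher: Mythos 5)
Your argument is correct and complete; the coordinate computation $U(Z^V)=A^i\omega_j\partial_iZ^j+B_iZ^i$, the test fields $\partial_k$ and $x^l\partial_k$ (localized by bump functions), and the density argument for killing $A^l$ all work as stated. The paper itself gives no proof, quoting the result from Yano--Patterson \cite{YP}, whose proof is essentially this same local-coordinate argument.
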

The vertical lift $\alpha ^V$ on $T^*M$ of a differential 1-form $\alpha \in \Omega ^1(M)$ is a tangent vector field to $T^*M$ which is defined by
\[
\alpha ^V(Z^V)=\left(\alpha (Z)\right)^V, \quad Z\in \chi (M) .
\]
In local coordinates we have
\[
\alpha ^V=\sum_{i=1}^{n}\alpha _i\partial _{i*} ,
\]
where $\alpha =\sum_{i=1}^{n}\alpha _i{\rm d}x^i$. Hence, identifying $f^V\in \mathcal{F}(T^*M)$ with $f\in \mathcal{F}(M)$, we obtain $\alpha ^V(f^V)=0$ for all $f\in \mathcal{F}(M)$.
\par
The complete lift $X^C$ on $T^*M$ of a vector field $X\in \chi (M)$ is a tangent vector field to $T^*M$ which is defined by
\[
X^C(Z^V)=[X,Z]^V, \quad Z\in \chi (M) .
\]
In local coordinates $X^C$ is written as
\[
X^C_{(x,\omega )}=\sum_{i=1}^{n}X^i(x)(\partial _i)_{(x,\omega )}-\sum_{h,i=1}^{n}\omega _h
(\partial _iX^h)(x)(\partial _{i*})_{(x,\omega )} ,
\]
where $ X=X^i\partial _i$. Therefore we have $X^C(f^V)=(Xf)^V$ for all $f\in \mathcal{F}(M)$.
\par
We note that the tangent space $T_{(x,\omega )}T^*M$ of $T^*M$ at any point $(x,\omega )\in T^*M$
is generated by the vector fields of the form $\alpha ^V+X^C$.

\section{Almost para-Hermitian structures induced by natural Riemann extensions}\label{sec-3}
This section deals with para-Hermitian geometry and first  we will recall some basic notions.
\par
An $(1,1)$ tensor field $P$ on a $2n$-dimensional smooth manifold $N$ is said to be {\it an almost product structure} if $P\neq \pm {\rm Id}$ and $P^2={\rm Id}$. In this case the pair $(N,P)$ is called {\it an almost product manifold}. An almost product structure $P$ on $N$ such that the eigendistributions of $P$ corresponding to the eigenvalues  $\pm 1$ of $P$ have the same rank, is called {\it a para-complex structure} and $(N,P)$ - {\it an almost para-complex manifold}. 
\par
A $2n$-dimensional smooth manifold 
$N$ has {\it an almost para-Hermitian structure} $(P,g)$ if it is endowed with an almost para-complex structure $P$ and a semi-Riemannian metric $g$ such that $P$ is an anti-isometry with respect to $g$, i.e. 
$g(PX,PY)=-g(X,Y)$, \, $X, Y \in \chi (N)$. The manifold $(N,P,g)$ is called {\it an almost para-Hermitian manifold}. The metric $g$ is necessarily indefinite of a neutral signature. The fundamental 2-form $\Omega$ and the tensor field $F$ of type $(0,3)$ of an almost para-Hermitian manifold are defined by $\Omega (X,Y)=g(X,PY)$ and  
$F(X,Y,Z)=g((\nabla _XP)Y,Z)$, respectively, where $\nabla $ is the Levi-Civita connection of $g$. The tensor field $F$ has the following properties:
\begin{equation}\label{3.0}
F(X,Y,Z)=-F(X,Z,Y) , \quad F(X,PY,PZ)=F(X,Y,Z) , \, \, X, Y, Z \in \chi (N).
\end{equation} 
A classification of the almost para-Hermitian manifolds  is given in \cite{B}. Here we recall the characteristic conditions of two basic classes of almost para-Hermitian manifolds:
\begin{itemize}
\item $(N,P,g)$ is para-K\"ahler if $\nabla P=0 \, \Longleftrightarrow F=0$;
\item $(N,P,g)$ is almost para-K\"ahler if ${\rm d}\Omega (X,Y,Z)=0 \Longleftrightarrow \\
\mathop{\s} \limits_{(X,Y,Z)}F(X,Y,Z)=0$,
where $\mathop{\s} \limits_{(X,Y,Z)}$ denotes the cyclic sum over $X, Y, Z$.
\end{itemize}
In this section we also need the following notion introduced in \cite{G-RV}:
\begin{defn}\label{Definition 3.1}
Any (1,1)-tensor field $T$ on a (semi-) Riemannian manifold $(N,h)$ is called harmonic if $T$ viewed as an endomorphism field $T : (TN,h^C)\longrightarrow (TN,h^C)$ is a harmonic map, where $h^C$ denotes the complete lift (see \cite{YI}) of the (semi-) Riemannian metric $h$.  
\end{defn}
We recall 
\begin{pro}\label{Proposition 3.1}\cite{G-RV}
Let $(N,h)$ be a (semi-) Riemannian manifold and let $\nabla $ be the Levi-Civita connection of $h$. Then any (1,1)-tensor field $T$ on $(N,h)$ is harmonic if and only if $\delta T=0$, where
\[
\delta T={\rm trace}_h(\nabla T)={\rm trace}_h\{(X,Y)\longrightarrow (\nabla _XT)Y\} .
\]
\end{pro}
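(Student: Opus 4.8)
The plan is to interpret harmonicity of $T$ via the vanishing of the tension field of the map $T\colon(TN,h^C)\to(TN,h^C)$ and then to identify that tension field with the vertical lift of $\delta T$. Recall that a smooth map between (semi-)Riemannian manifolds is harmonic if and only if its tension field vanishes, and that for $T$ the tension field is $\tau(T)=\operatorname{trace}_{h^C}\beta(T)$, where the second fundamental form is $\beta(T)(U,V)=\widetilde\nabla_U\!\big(dT(V)\big)-dT\!\big(\nabla^{h^{C}}_UV\big)$, with $\nabla^{h^{C}}$ the Levi-Civita connection of $h^C$ (in both its source and its target role, since source and target coincide) and $\widetilde\nabla$ the induced connection in the bundle of vector fields along $T$. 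Thus \propref{Proposition 3.1} is equivalent to the assertion $\tau(T)=0\Longleftrightarrow\delta T=0$.

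The key input I would use is that the Levi-Civita connection $\nabla^{h^{C}}$ of the complete lift metric $h^C$ is the complete lift $\nabla^C$ of the Levi-Civita connection $\nabla$ of $h$ (Yano--Ishihara, see \cite{YI}), together with the lift identities
\[
\nabla^C_{X^C}Y^C=(\nabla_XY)^C,\quad \nabla^C_{X^C}Y^V=\nabla^C_{X^V}Y^C=(\nabla_XY)^V,\quad \nabla^C_{X^V}Y^V=0,
\]
\[
h^C(X^C,Y^C)=\big(h(X,Y)\big)^C,\quad h^C(X^C,Y^V)=\big(h(X,Y)\big)^V,\quad h^C(X^V,Y^V)=0,
\]
for $X,Y\in\chi(N)$. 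Since $T$ is fibrewise linear and covers the identity of $N$, one has $dT(X^V)=(TX)^V$ and $dT(X^C)=X^C+\Theta(X)$, where $X^C$ is regarded as a vector field along $T$ and $\Theta(X)$ is a vertical correction; the precise form of $\Theta(X)$ is immaterial, since in the computation it is always acted on by $\nabla^C$ in a vertical direction and evaluated on a vertical lift with fibrewise constant components, where it vanishes. Feeding this into $\beta(T)$ and using the identities above gives, after a routine computation,
\[
\beta(T)(X^V,Y^V)=0,\qquad \beta(T)(X^C,Y^V)=\big((\nabla_XT)Y\big)^V,
\]
and, because $\nabla^{h^{C}}$ is torsion-free so that $\beta(T)$ is symmetric, $\beta(T)(X^V,Y^C)=\big((\nabla_YT)X\big)^V$.

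Finally I would trace with respect to $h^C$. Taking a local frame $\{E_a\}$ of $TN$ and the frame $\{E_a^C,E_a^V\}$ of $T(TN)$, the matrix of $h^C$ has block form $\left(\begin{smallmatrix}G&H\\ H&0\end{smallmatrix}\right)$ with $H_{ab}=h(E_a,E_b)$, so its inverse has vanishing $(C,C)$-block and $(C,V)$-block $(h^{ab})$, while the remaining $(V,V)$-block is harmless because $\beta(T)(E_a^V,E_b^V)=0$. Hence
\[
\tau(T)=\operatorname{trace}_{h^C}\beta(T)=\sum_{a,b}h^{ab}\Big(\beta(T)(E_a^C,E_b^V)+\beta(T)(E_a^V,E_b^C)\Big)=2\Big(\operatorname{trace}_h(\nabla T)\Big)^V=2\,(\delta T)^V,
\]
and since the vertical lift is injective, $\tau(T)=0$ if and only if $\delta T=0$, which is the statement. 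I expect the main obstacle to be the careful handling of the pull-back connection $\widetilde\nabla$ along $T$: one must check that the correction term $\Theta(X)$ genuinely drops out and that no curvature terms (which enter the Christoffel symbols of $h^C$) survive. An equivalent but more computational route is to run the whole calculation in induced coordinates $(x^i,y^i)$ on $TN$, where $T$ reads $(x^i,y^i)\mapsto(x^i,T^i_j(x)y^j)$ and the explicit Christoffel symbols of $h^C$ are available; there the delicate point is the cancellation of the curvature contributions among the horizontal components of $\tau(T)$.
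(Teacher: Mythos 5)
The paper does not prove this proposition at all: it is quoted verbatim from \cite{G-RV} as a known result, so there is no internal proof to compare against. Your argument is correct and is essentially the standard one from that reference: the Levi-Civita connection of $h^C$ is $\nabla^C$, the only blocks of $\beta(T)$ weighted by the inverse of $h^C$ are the mixed ones and the vertical--vertical one, and these reduce to $\bigl((\nabla_XT)Y\bigr)^V$ and $0$ respectively, giving $\tau(T)=2(\delta T)^V$. The one place requiring care is exactly the one you flag: in $\beta(T)(X^V,Y^C)$ the vertical correction $\Theta(Y)$ has fibrewise non-constant components and its vertical derivative does not vanish termwise, so the clean route is the one you take, namely computing only $\beta(T)(X^C,Y^V)$ (where $\Theta(X)$ appears solely as a differentiation direction applied to the vertical lift $(TY)^V$ and hence contributes nothing) and transferring the result by the symmetry of the second fundamental form. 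With that reading, the proof is complete.
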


\par
Further, if it is not otherwise stated, we assume that $(M,\nabla )$ is an n-dimensional manifold endowed with a symmetric linear connection $\nabla $ (i. e. $\nabla $ is torsion-free). In \cite{S} Sekizawa constructed a semi-Riemannian metric $\overline g$ at each point $(x,\omega )$ of the cotangent bundle $T^*M$ of $M$ by:
\begin{equation}\label{3.1}
\begin{array}{ll}
\overline g_{(x,\omega)}(X^C,Y^C)=-a\omega (\nabla _{X_x}Y+\nabla _{Y_x}X)+b\omega (X_x)\omega (Y_x) , \\
\overline g_{(x,\omega)}(X^C,\alpha ^V)=a\alpha _x(X_x) , \\
\overline g_{(x,\omega)}(\alpha ^V,\beta ^V)=0 
\end{array}
\end{equation} 
for all vector fields $X, Y$  and all differential 1-forms $\alpha , \beta$ on $M$, where $a, b$ are arbitrary constants. We may assume $a>0$ without loss of generality. The semi-Riemannian metric $\overline g$ defined by \eqref{3.1} is called {\it a natural Riemann extension} \cite{KS, S}. When $b\neq 0$ $\overline g$ is called {\it a proper natural Riemann extension}. In the case when $a=1$ and $b=0$ we obtain the notion of the {\it classical Riemann extension} defined by Patterson and Walker (see \cite{PW, W}). In \cite{B2} it is shown that $\overline g$ is of neutral signature $(n,n)$. 
\par
In \cite{B1} authors have constructed a canonical almost para-complex structure $\mathcal{P}$ on $T^*M$ by $\mathcal{P}X^C=X^C$ and $\mathcal{P}\alpha ^V=-\alpha ^V$, where $X^C$ and $\alpha ^V$ are the complete lift of a vector field $X$ and the vertical lift of a differential 1-form $\alpha $ on $M$, respectively. They proved that $\mathcal{P}$ is harmonic if and only if the natural Riemann extension $\overline g$ on $T^*M$ is non-proper. 
\par
In this section we shall construct almost para-complex structures $P$ and $P_1$ on $T^*M$ such that 
$(P,\overline g)$ and $(P_1,\overline g_1)$ are almost para-Hermitian structures on $T^*M$, where 
$\overline g$ (resp. $\overline g_1$) is the proper (resp. non-proper) natural Riemann extension on 
$T^*M$. Moreover, we show that $P$ and $P_1$ are harmonic with respect to $\overline g$ and 
$\overline g_1$, respectively. 
\par
The following conventions and formulas will be used later on.
\par
Let $T$ be an $(1,1)$ tensor field on a manifold $M$. Then the contracted vector field $C(T)\in \chi (T^*M) $ is defined at any point $(x,\omega )\in T^*M$ by its value on any evaluation function as follows:
\begin{equation}\label{3.2} 
C(T)(Z^V)_{(x,\omega )}=(TZ)^V_{(x,\omega )}=\omega ((TZ)_x), \quad Z\in \chi (M).
\end{equation}
For an 1-form $\alpha $ on $M$ we denote by $i_\alpha (T)$ the 1-form on $M$, defined by
\begin{equation}\label{3.3} 
(i_\alpha (T))(Z)=\alpha (TZ), \quad Z\in \chi (M).
\end{equation}
By using \eqref{3.3} we obtain
\begin{equation}\label{3.4}
(i_\alpha (T))^V(Z)^V_{(x,\omega )}=(\omega (T))^V(Z)^V_{(x,\omega )}=\omega ((TZ)_x), \quad Z\in \chi (M).
\end{equation}
Now, the equalities \eqref{3.2}, \eqref{3.4} and \propref{Proposition 2.1} imply that at each point 
$(x,\omega )\in T^*M$ the following equality holds
\begin{equation}\label{3.5}
C(T)_{(x,\omega )}=(\omega _x(T))^V .
\end{equation}
Also, at each point $(x,\omega )\in T^*M$ we have 
\begin{equation}\label{3.6}
W_{(x,\omega )}=(\omega _x)^V .
\end{equation}
Taking into account \eqref{3.1}, \eqref{3.5} and \eqref{3.6} we obtain
\begin{equation}\label{3.7}
\begin{array}{ll}
\overline g_{(x,\omega)}(X^C,C(T))=a\omega _x((TX)_x) , \quad \overline g_{(x,\omega)}(W,\alpha ^V)=0,
\\ \\
\overline g_{(x,\omega)}(W,W)=\overline g_{(x,\omega)}(W,C(T))=\overline g_{(x,\omega)}
(C(T_1),C(T_2))=0 ,
\end{array}
\end{equation}
where $T_1$ and $T_2$ are arbitrary $(1,1)$ tensor fields on $M$.
\par
For the Levi-Civita connection $\overline \nabla $ of the proper natural Riemann extension $\overline g$ we get the formulas (see \cite{KS}):
\begin{equation}\label{3.8}
\begin{array}{lllll}
(\overline \nabla _{X^C}Y^C)_{(x,\omega)}=(\nabla _XY)^C_{(x,\omega)}+C((\nabla X)(\nabla Y)+(\nabla Y)(\nabla X))_{(x,\omega )}\\ \\
\qquad \qquad \qquad \, +C(R(.,X)Y+R(.,Y)X)_{(x,\omega )}\\
\qquad \qquad \qquad \,\displaystyle -\frac{b}{2a}\left\{\omega (Y)X^C+\omega (X)Y^C+2\omega (Y)
C(\nabla X)+2\omega (X)C(\nabla Y)\right. \\
\qquad \qquad \qquad \, \, \left.\displaystyle +\omega (\nabla _XY+\nabla _YX)W\right\}_{(x,\omega )}+
\displaystyle \frac{b^2}{a^2}\omega (X)\omega (Y)W_{(x,\omega )}, \\ \\
(\overline \nabla _{X^C}\beta ^V)_{(x,\omega)}=(\nabla _X\beta )^V_{(x,\omega)}+
\displaystyle \frac{b}{2a}\left\{\omega (X)\beta ^V+\beta (X)W\right\}_{(x,\omega)}, \\ \\
(\overline \nabla _{\alpha ^V}Y^C)_{(x,\omega)}=-(i_\alpha (\nabla Y))^V_{(x,\omega)}+
\displaystyle \frac{b}{2a}\left\{\omega (Y)\alpha ^V+\alpha (Y)W\right\}_{(x,\omega)}, \\ \\
(\overline \nabla _{\alpha ^V}\beta ^V)_{(x,\omega)}=0 , \qquad \quad (\overline \nabla _{X^C}W)_{(x,\omega)}=-C(\nabla X)_{(x,\omega)}+\displaystyle \frac{b}{a}\omega (X)W_{(x,\omega)}, \\ \\
(\overline \nabla _{\alpha ^V}W)_{(x,\omega)}=\alpha ^V_{(x,\omega)}, \qquad \quad
(\overline \nabla _WW)_{(x,\omega)}=W_{(x,\omega)},
\end{array}
\end{equation}
where: $X^C, Y^C$ and  $\alpha ^V, \beta ^V$ are the complete lifts of the vector fields $X, Y\in \chi (M)$ and  the vertical lifts of the differential 1-forms $\alpha , \beta$ on $M$, respectively; 
$C(\nabla X)\in \chi (T^*M)$ is the contracted $(1,1)$ tensor field $\nabla X$ on $M$, defined by $(\nabla X)(Z)=\nabla _ZX$, $Z\in \chi (M)$; $R$ is the curvature tensor of $\nabla $ and $C(R(.,X)Y)$ is  the contracted $(1,2)$ tensor field $R(.,X)Y$ on $M$ given by $(R(.,X)Y)(Z)=R(Z,X)Y)$, 
$Z\in \chi (M)$.
\par
On $T^*M$ endowed with a proper natural Riemann extension $\overline g$ we define the endomorphism $P$ by
\begin{equation}\label{3.9}
\begin{array}{ll}
PX^C=X^C+2C(\nabla X)-\frac{b}{a}X^VW , \\
%=a{\rm grad} X^V 
P\alpha ^V=-\alpha ^V .
\end{array}
\end{equation}
\begin{thm}\label{Theorem 3.1}
Let the total space of the cotangent bundle $T^*M$ of an $n$-dimensional manifold $(M,\nabla )$ be endowed with the proper natural Riemann extension $\overline g$, defined by \eqref{3.1}, and the endomorphism $P$, defined by \eqref{3.9}. Then $(T^*M,P,\overline g)$ is an almost para-Hermitian manifold. Moreover
\par
(i) if $M$ is not flat (resp. $M$ is flat), then $(T^*M,P,\overline g)$ is almost para-K\"ahler (resp.  
para-K\"ahler);
\par
(ii) $P$ is harmonic on $(T^*M,\overline g)$.
\end{thm}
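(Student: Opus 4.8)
The plan is to verify the three assertions in turn, working entirely from the explicit formulas \eqref{3.1}, \eqref{3.7}, \eqref{3.8}, \eqref{3.9} together with the bracket computation rules for lifts.

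\textbf{Step 1: $P$ is an almost para-complex structure and $\overline g$-anti-isometry.} First I would compute $P^2$ on a generating set of vector fields on $T^*M$, namely those of the form $\alpha^V + X^C$. On $\alpha^V$ this is immediate: $P^2\alpha^V = \alpha^V$. On $X^C$ one applies $P$ to \eqref{3.9}; here one must know how $P$ acts on $C(\nabla X)$ and on $X^V W$. Since $C(\nabla X) = (\omega(\nabla X))^V$ by \eqref{3.5} is a vertical vector field and $W$ is vertical (being $(\omega)^V$ by \eqref{3.6}), $P$ sends both to their negatives, and one checks that the resulting terms cancel so that $P^2 X^C = X^C$. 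Then $P \neq \pm\,\mathrm{Id}$ is clear (the $+1$ and $-1$ eigendistributions are the images of the ``modified complete lifts'' and the vertical lifts, each of fibre dimension $n$), so $P$ is a para-complex structure. For the anti-isometry property $\overline g(PU, PV) = -\overline g(U,V)$ one checks the three cases $(X^C,Y^C)$, $(X^C,\alpha^V)$, $(\alpha^V,\beta^V)$ using \eqref{3.1} and the auxiliary identities \eqref{3.7} (in particular $\overline g(W,W) = \overline g(W, C(T)) = \overline g(C(T_1),C(T_2)) = 0$ and $\overline g(X^C, C(T)) = a\,\omega((TX)_x)$); the cross terms produced by the correction $2C(\nabla X) - \tfrac{b}{a}X^V W$ are exactly what is needed to flip the sign of $\overline g(X^C,Y^C)$.

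\textbf{Step 2: the K\"ahler/almost-K\"ahler dichotomy.} Using $F(U,V,Z) = \overline g((\overline\nabla_U P)V, Z)$, I would compute $\overline\nabla P$ by plugging the connection formulas \eqref{3.8} and the definition \eqref{3.9} into $(\overline\nabla_U P)V = \overline\nabla_U(PV) - P(\overline\nabla_U V)$, again on the generators $X^C, \alpha^V$. This is the longest computation, but it is purely mechanical: every term on the right is a linear combination of $X^C$-type, $C(\nabla X)$-type, $C(R(\cdot,X)Y)$-type, $\alpha^V$-type, and $W$-type fields with function coefficients, and one collects them. I expect the outcome to be that $\overline\nabla P$ vanishes identically \emph{except} for a single surviving block built from the curvature term $C(R(\cdot,X)Y+R(\cdot,Y)X)$ in the first line of \eqref{3.8}; the $\tfrac{b}{a}$- and $\tfrac{b^2}{a^2}$-corrections, which complicated the metric, should cancel against the correction terms in $P$. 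Consequently $F = 0$ (para-K\"ahler) precisely when $R \equiv 0$, i.e. when $M$ is flat; and when $M$ is not flat one must verify the weaker almost-para-K\"ahler condition $\mathop{\s}\limits_{(X,Y,Z)} F(X,Y,Z) = 0$. For the latter I would invoke the symmetry and first Bianchi identities of the curvature tensor $R$ of the torsion-free connection $\nabla$: the surviving $F$ is (up to the constant $a$) a contraction of $R(\cdot,X)Y$ paired against the third slot, and cyclic summation kills it by Bianchi. One should also double-check the algebraic identities \eqref{3.0} hold for the computed $F$ as a consistency check.

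\textbf{Step 3: harmonicity of $P$.} By \propref{Proposition 3.1}, $P$ is harmonic iff $\delta P = \mathrm{trace}_{\overline g}(\overline\nabla P) = 0$. Having the explicit $\overline\nabla P$ from Step 2, I would trace it using a $\overline g$-orthonormal (or, more conveniently, a dual null) frame adapted to the splitting into complete lifts and vertical lifts --- the natural choice being frames built from $\partial_i, \partial_{i*}$, whose $\overline g$-pairings are read off \eqref{3.1}. Since in Step 2 the only surviving part of $\overline\nabla P$ is the curvature block, which is trace-free by the symmetries of $R$ (the trace over a null pair $(\partial_i, \partial_{i*})$ produces a Ricci-type contraction of $R(Z,X)Y$ in slots that vanish by skew-symmetry $R(Z,X) = -R(X,Z)$), the trace vanishes regardless of flatness. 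This is the cleanest part; the main obstacle overall is the sheer bookkeeping in Step 2, where one must carefully keep track of which terms are vertical (hence killed or sign-flipped by $P$) versus which involve the ``horizontal'' complete-lift part, and ensure the $b$-dependent terms really do cancel so that the answer depends only on the curvature of $\nabla$.
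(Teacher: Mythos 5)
Your Steps 1 and 2 follow the paper's proof essentially verbatim: the paper verifies $P^2=\mathrm{Id}$ and the anti-isometry property on the generators $X^C,\alpha^V$ exactly as you describe (using \eqref{3.5}, \eqref{3.6} to see that $C(\nabla X)$ and $W$ are vertical, hence sent to their negatives), and then computes $\overline F$ from \eqref{3.8} and \eqref{3.9}, finding that the only nonzero component is $\overline F(X^C,Y^C,Z^C)=2a\,\omega(R(Z,Y)X)$ and concluding the para-K\"ahler/almost-para-K\"ahler dichotomy via the first Bianchi identity, just as you predict. Where you genuinely diverge is part (ii). You propose to trace the explicit $\overline\nabla P$ in an adapted null frame; the paper instead proves the \emph{general} statement that $\delta P=0$ on every almost para-K\"ahler manifold: from $\mathop{\s}\limits_{(X,Y,Z)}F(X,Y,Z)=0$ and the identities \eqref{3.0} one deduces $(\nabla_XP)Y=(\nabla_{PX}P)PY$, and then the trace over a basis $\{e_i,Pe_i\}$ with $g(e_i,e_i)=-g(Pe_i,Pe_i)$ cancels in pairs (the para-K\"ahler case being trivial since $\nabla P=0$). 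The paper's route buys generality and avoids any frame computation; yours is more concrete and would also work, but note one imprecision in your parenthetical: the curvature block of $\overline\nabla P$ lives entirely in the $(X^C,Y^C)$ slots, while the inverse of $\overline g$ pairs complete lifts with vertical lifts (its purely ``complete-lift'' block is zero), so in the null frame the curvature block simply never enters the trace --- the vanishing comes from $\overline F(X^C,\alpha^V,\cdot)=\overline F(\alpha^V,Y^C,\cdot)=\overline F(\alpha^V,\beta^V,\cdot)=0$ in \eqref{3.11}, not from a Ricci-type contraction of $R$ killed by skew-symmetry. Carrying out your frame computation carefully would surface this and still yield $\delta P=0$, so there is no gap, only a misattributed mechanism.
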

\begin{proof}
From \eqref{3.5}, \eqref{3.6} and \eqref{3.9} it follows that 
\begin{equation}\label{3.10}
P(C(\nabla X))=-C(\nabla X) , \quad PW=-W .
\end{equation}
By using \eqref{3.9} and \eqref{3.10} we see that $P\neq {\rm Id}$ and $P^2={\rm Id}$. One can easily  verify that the eigendistributions of $P$ corresponding to the eigenvalues $\pm 1$ of $P$ have the same rank. Hence, $P$ is an almost para-complex structure on $T^*M$. By direct calculations, using \eqref{3.1},  \eqref{3.7} and \eqref{3.9} we obtain
\begin{equation*}
\begin{array}{ll}
\overline g(PX^C,PY^C)=-\overline g(X^C,Y^C), \quad \overline g(PX^C,P\alpha ^V)=-\overline g(X^C,\alpha ^V), \\
\overline g(P\alpha ^V,P\beta ^V)=-\overline g(\alpha ^V,\beta ^V),
\end{array}
\end{equation*}
which means that $(T^*M,P,\overline g)$ is an almost para-Hermitian manifold.
\par (i) Further, we find the tensor field 
$\overline F(\overline X,\overline Y,\overline Z)=\overline g((\overline \nabla _{\overline X}P)\overline Y,\overline Z)$ on  $(T^*M,P,\overline g)$, where $\overline X, \overline Y, \overline Z \in \chi (T^*M)$.
By using \eqref{3.1}, \eqref{3.7},  \eqref{3.8}, \eqref{3.9} and \eqref{3.10} we obtain
\begin{equation}\label{3.11}
\begin{array}{llll}
\overline F_{(x,\omega )}(X^C,Y^C,Z^C)=2a\omega (R_x(Z,Y)X),\\
\overline F_{(x,\omega )}(X^C,\alpha ^V,Z^C)=-\overline F_{(x,\omega )}(X^C,Z^C,\alpha ^V)=0, \\
\overline F_{(x,\omega )}(\alpha ^V,\beta ^V,Z^C)=-\overline F_{(x,\omega )}(\alpha ^V,,Z^C,\beta ^V)=0, \\
\overline F_{(x,\omega )}(\alpha ^V,Y^C,Z^C)=
\overline F_{(x,\omega )}(X^C,\beta ^V,\gamma ^V)=\overline F_{(x,\omega )}(\alpha ^V,\beta ^V,\gamma ^V)=0 .
\end{array}
\end{equation}
If $M$ is flat, then from \eqref{3.11} it follows that $\overline F(X^C+\alpha ^V,Y^C+\beta ^V,Z^C+\gamma ^V)=0$ for arbitrary $X^C+\alpha ^V,  Y^C+\beta ^V, Z^C+\gamma ^V\in \chi (T^*M)$ which means that $(T^*M,P,\overline g)$ is para-K\"ahler. 
In the case when $M$ is not flat, then the equalities \eqref{3.11} and the first identity of Bianchi for $R$ imply
\begin{equation}\label{3.12}
\begin{array}{ll}
\mathop{\s} \limits_{(X^C+\alpha ^V,Y^C+\beta ^V,Z^C+\gamma ^V)}\overline F(X^C+\alpha ^V,Y^C+\beta ^V,Z^C+\gamma ^V)=\\
\mathop{\s} \limits_{(X^C,Y^C,Z^C)}\overline F(X^C,Y^C,Z^C)=2a\mathop{\s} \limits_{(X,Y,Z)}\omega (R(Z,Y)X)=0 ,
\end{array}
\end{equation}
i.e. $(T^*M,P,\overline g)$ is an almost para-K\"ahler manifold.
\par (ii) As an consequence from the characteristic condition $\mathop{\s} \limits_{(X,Y,Z)}F(X,Y,Z)=0$ of an almost  para-K\"ahler manifold $(N,P,g)$ and the properties \eqref{3.0} of $F$ we obtain 
\begin{equation*}
F(PX,PY,Z)=F(X,Y,Z) , \quad X, Y, Z \in \chi (N).
\end{equation*}
The last equality implies $(\nabla _XP)Y=(\nabla _{PX}P)PY$.  Then if $\{e_1,\ldots ,e_n,Pe_1,\ldots ,Pe_n\}$ is an orthonormal basis on $N$, such that $g(e_i,e_i)=-g(Pe_i,Pe_i)=1$ $(i=1,\ldots ,n)$, 
for $\delta P$ we have 
\begin{equation*}
\delta P={\rm trace}_g\nabla P=\sum_{i=1}^{n}\left\{(\nabla _{e_i}P)e_i-(\nabla _{Pe_i}P)Pe_i\right\}=0 .
\end{equation*}
Hence,  the almost para-complex structure $P$ is harmonic on every  almost para-K\"ahler manifold 
$(N,P,g)$. In the case when $(N,P,g)$ is para-K\"ahler, then $\nabla P=0$ and $\delta P=0$ holds too.
\end{proof}
Now, let us assume that $T^*M$ is endowed with a non-proper natural Riemann extension $\overline g_1$, i.e. $\overline g_1$ is given by \eqref{3.1} and $b=0$.  We define the endomorphism $P_1$ by
\begin{equation}\label{3.13}
\begin{array}{ll}
P_1X^C=X^C+2C(\nabla X) , \\
P_1\alpha ^V=-\alpha ^V .
\end{array}
\end{equation}
By direct verification we establish that $(P_1,\overline g_1)$ is an almost para-Hermitian structure on 
$T^*M$ which is obtained from the almost para-Hermitian structure $(P,\overline g)$ on $T^*M$ by $b=0$. Moreover, from \eqref{3.11} we see that the tensor $\overline F$ on $(T^*M,P,\overline g)$ does not depend on $b$. Therefore we obtain
\begin{thm}\label{Theorem 3.2}
Let the total space of the cotangent bundle $T^*M$ of an $n$-dimensional manifold $(M,\nabla )$ be endowed with the non-proper natural Riemann extension $\overline g_1$ and the endomorphism $P_1$, defined by \eqref{3.13}. Then $(T^*M,P_1,\overline g_1)$ is an almost para-Hermitian manifold. Moreover
\par
(i)  if $M$ is not flat (resp. $M$ is flat), then $(T^*M,P_1,\overline g_1)$ is almost para-K\"ahler (resp.  
para-K\"ahler);
\par
(ii) $P_1$ is harmonic on $(T^*M,\overline g_1)$.
\end{thm}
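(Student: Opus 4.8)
The plan is to deduce every assertion by specialising the proof of \thmref{Theorem 3.1} to the value $b=0$, since $\overline g_1$ and $P_1$ are precisely the metric $\overline g$ of \eqref{3.1} and the endomorphism $P$ of \eqref{3.9} with $b=0$. First I would record the analogues of \eqref{3.10}: from \eqref{3.5}, \eqref{3.6} and \eqref{3.13} one gets $P_1(C(\nabla X))=-C(\nabla X)$ and $P_1W=-W$; combined with $P_1X^C=X^C+2C(\nabla X)$ and $P_1\alpha^V=-\alpha^V$ this yields $P_1^2={\rm Id}$ and $P_1\neq{\rm Id}$, and the equality of the ranks of the $\pm1$-eigendistributions is checked exactly as for $P$. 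The anti-isometry relations $\overline g_1(P_1\overline X,P_1\overline Y)=-\overline g_1(\overline X,\overline Y)$ on the three types of pairs $(X^C,Y^C)$, $(X^C,\alpha^V)$, $(\alpha^V,\beta^V)$ follow from \eqref{3.1}, \eqref{3.7} and \eqref{3.13} by the same computation used for $P$ (note $\overline g_1$ still has neutral signature $(n,n)$, being the $b=0$ case of $\overline g$). Hence $(T^*M,P_1,\overline g_1)$ is an almost para-Hermitian manifold.

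For part (i) I would compute the tensor $\overline F_1(\overline X,\overline Y,\overline Z)=\overline g_1((\overline\nabla_1{}_{\overline X}P_1)\overline Y,\overline Z)$, where $\overline\nabla_1$ is the Levi-Civita connection of $\overline g_1$. This connection is obtained from \eqref{3.8} by putting $b=0$, so every $\tfrac{b}{a}$- and $\tfrac{b^2}{a^2}$-term drops out; carrying out the same calculation that produced \eqref{3.11} then shows that $\overline F_1$ coincides with the tensor $\overline F$ of \eqref{3.11}, which — as already observed before the statement — does not involve $b$. Thus $\overline F_1(X^C,Y^C,Z^C)=2a\,\omega(R_x(Z,Y)X)$ and all remaining components vanish. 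If $M$ is flat this gives $\overline F_1=0$, so $(T^*M,P_1,\overline g_1)$ is para-K\"ahler; if $M$ is not flat, the first Bianchi identity for $R$ forces $\mathop{\s}\limits_{(\overline X,\overline Y,\overline Z)}\overline F_1(\overline X,\overline Y,\overline Z)=0$ exactly as in \eqref{3.12}, so $(T^*M,P_1,\overline g_1)$ is almost para-K\"ahler.

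Part (ii) then requires no further computation: the closing argument in the proof of \thmref{Theorem 3.1} shows that on any almost para-K\"ahler manifold — and trivially on any para-K\"ahler one — the para-complex structure $P_1$ satisfies $\delta P_1={\rm trace}_{\overline g_1}\overline\nabla_1 P_1=0$, whence $P_1$ is harmonic by \propref{Proposition 3.1}. Applying this to $(T^*M,P_1,\overline g_1)$ gives the claim.

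I do not expect a real obstacle here; the single point deserving care is the bookkeeping in the second paragraph, namely verifying that setting $b=0$ in \eqref{3.8} annihilates every $b$-dependent contribution to the $(\overline\nabla_1 P_1)$-computation and that no term hidden inside $C(\nabla X)$ survives to alter \eqref{3.11}. Once this is confirmed, \thmref{Theorem 3.2} follows from \thmref{Theorem 3.1} by the substitution $b=0$.
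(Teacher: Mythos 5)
Your proposal is correct and follows essentially the same route as the paper: the authors likewise observe that $(P_1,\overline g_1)$ is the $b=0$ specialisation of $(P,\overline g)$, that the tensor $\overline F$ in \eqref{3.11} does not depend on $b$, and then invoke the argument of \thmref{Theorem 3.1} verbatim. Your version merely spells out the bookkeeping that the paper leaves as ``by direct verification.''
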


\section{Almost paracontact metric structures induced by proper natural Riemann extensions}\label{sec-4}
In this section we will construct almost paracontact metric structures on  hypersurfaces of almost 
para-K\"ahler  and para-K\"ahler manifolds $(T^*M,P,\overline g)$ considered in \secref{sec-3}.
\par
A (2n+1)-dimensional smooth manifold $\widetilde M$
has an \emph{almost paracontact structure} $(\varphi,\overline \xi,\eta)$ if it admits a tensor field
$\varphi$ of type $(1,1)$, a vector field $\overline \xi$ and a 1-form
$\eta$ satisfying the  following  conditions:
\[
 \varphi^2 = {\rm Id} - \eta \otimes \overline \xi, \quad \eta (\overline \xi)=1, \quad 
 \varphi(\overline \xi)=0.
\] 
As immediate consequences of the definition of the almost paracontact structure we have that the endomorphism $\varphi$ has rank $2n$ and $\eta \circ \varphi=0$. If a manifold $\widetilde M$ with 
$(\varphi,\overline \xi,\eta)$-structure admits a pseudo-Riemannian metric $g$ such that
\[
g(\varphi X,\varphi Y)=-g(X,Y)+\eta (X)\eta (Y), \quad X, Y  \in \chi(\widetilde M)
\]
then we say that $\widetilde M$ has an almost paracontact metric structure and $(\widetilde M,\varphi,\overline \xi,\eta ,g)$ is called {\it an almost paracontact metric manifold}. The metric $g$ is called \emph{compatible} metric and it is necessarily of signature $(n+1,n)$. Setting $Y=\overline \xi$, we have $\eta(X)=g(X,\overline \xi)$.

The fundamental 2-form $\phi$  on $(\widetilde M,\varphi,\overline \xi,\eta ,g)$ is given by 
$\phi(X,Y)=g(X,\varphi Y)$
and the tensor field $\widetilde F$ of type $(0,3)$ is defined by
\[
\widetilde F(X,Y,Z)=(\widetilde \nabla \phi )(X,Y,Z)=(\widetilde \nabla _X\phi )(Y,Z)=\\
g((\widetilde \nabla _X\varphi )Y,Z) ,
\]
where $X, Y, Z \in \chi(\widetilde M)$ and $\widetilde \nabla $ is the Levi-Civita connection on 
$\widetilde M$. The tensor field $\widetilde F$ has the following properties:
\begin{equation*}
\begin{array}{ll}
\widetilde F(X,Y,Z)=-\widetilde F(X,Z,Y), \\
\widetilde F(X,\varphi Y, \varphi Z)=\widetilde F(X,Y,Z)+\eta(Y)\widetilde F(X,Z,\overline \xi)-\eta(Z)\widetilde F(X,Y,\overline \xi).
\end{array}
\end{equation*}
The following 1-forms are associated with $\widetilde F$:
\begin{equation*}
\theta(X)=g^{ij}\widetilde F(e_i,e_j,X); \,
\theta^*(X)=g^{ij}\widetilde F(e_i,\varphi e_j,X); \,
\omega(X)=\widetilde F(\overline \xi,\overline \xi,X),
\end{equation*}
where $\{e_i,\overline \xi\}$ $(i=1,\ldots,2n)$ is a basis of $T\widetilde M$, and $(g^{ij})$ is the inverse matrix of $(g_{ij})$.
An almost paracontact metric manifold is called
\begin{itemize}
\item {\it normal} if $N(X,Y)-2d\eta (X,Y)\overline \xi = 0$, where
\[
N(X,Y)=\varphi ^2[X,Y]+[\varphi X,\varphi Y]-\varphi [\varphi X,Y]-\varphi [X,\varphi Y]
\]
is the Nijenhuis torsion tensor of $\varphi $ (see \cite{});
\item {\it paracontact metric} if $\phi =d\eta$;
\item  {\it $\alpha $-para-Sasakian} if $(\widetilde \nabla_X\varphi)Y=\alpha(g(X,Y)\overline \xi-\eta(Y)X)$, where $\alpha\neq 0$ is constant; \item {\it para-Sasakian} if it is normal and paracontact metric;
\item  {\it $\alpha $-para-Kenmotsu} if $(\widetilde \nabla_X\varphi)Y=-\alpha(g(X,\varphi Y)\overline \xi+\eta(Y)\varphi X)$, where $\alpha\neq 0$ is constant, in particular, para-Kenmotsu if $\alpha=-1$;
\item {\it K-paracontact} if it is paracontact and $\overline \xi$ is Killing vector field;
\item {\it quasi-para-Sasakian} if it is normal and $d\phi =0$.
%\item {\it almost para-cosympletic} if $d\eta =0$ and $d\phi =0$.
\end{itemize}
Twelve basic classes of almost paracontact metric manifolds with respect to the tensor field  $\widetilde F$ were obtained in \cite{ZN}. Further we give the characteristic conditions of these classes:
\begin{equation}\label{4.101}
\begin{array}{ll}
\mathbb{G}_1:\widetilde F(X,Y,Z)=\displaystyle\frac{1}{2(n-1)}\{g(X,\varphi Y)\theta (\varphi Z)-
g(X,\varphi Z)\theta (\varphi Y)\\ \\
\qquad \qquad \qquad \quad -g(\varphi X,\varphi Y)\theta (\varphi ^2Z)+g(\varphi X,\varphi Z)\theta (\varphi ^2Y)\} ,
\end{array}
\end{equation}
\begin{equation}\label{4.102}
\begin{array}{l}
\mathbb{G}_2:\widetilde F(\varphi X ,\varphi Y,Z)=-\widetilde F(X,Y,Z) , \qquad  \theta =0 ,
\end{array}
\end{equation}
\begin{equation}\label{4.103}
\mathbb{G}_3:\widetilde F(\overline \xi ,Y,Z)=\widetilde F(X,\overline \xi ,Z)=0, \qquad  \widetilde F(X,Y,Z)=-\widetilde F(Y,X,Z) ,
\end{equation}
\begin{equation}\label{4.104}
\mathbb{G}_4:\widetilde F(\overline \xi ,Y,Z)=\widetilde F(X,\overline \xi ,Z)=0, \quad   \mathop{\s} \limits_{(X,Y,Z)}\widetilde F(X,Y,Z)=0 ,
\end{equation}
\begin{equation}\label{4.105}
\mathbb{G}_5:\widetilde F(X,Y,Z)=\displaystyle{\frac{\theta (\overline \xi)}{2n}}\{\eta(Y)g(\varphi X,\varphi Z)-\eta(Z)g(\varphi X,\varphi Y)\} ,
\end{equation}
\begin{equation}\label{4.106}
\mathbb{G}_6:\widetilde F(X,Y,Z)=-\displaystyle{\frac{\theta ^*(\overline \xi)}{2n}}\{\eta(Y)g(X,\varphi Z)-\eta(Z)g(X,\varphi Y)\} ,
\end{equation}
\begin{equation}\label{4.107}
\begin{array}{ll}
\mathbb{G}_7:\widetilde F(X,Y,Z)=-\eta(Y)\widetilde F(X,Z,\overline \xi )+\eta(Z)\widetilde F(X,Y,\overline\xi ),\\  \\
\qquad \qquad \widetilde F(X,Y,\overline \xi )=-\widetilde F(Y,X,\overline \xi )=-\widetilde F(\varphi X,\varphi Y,\overline \xi ), \quad \theta ^*(\overline \xi)=0 , 
\end{array}
\end{equation}
\begin{equation}\label{4.108}
\begin{array}{ll}
\mathbb{G}_8:\widetilde F(X,Y,Z)=-\eta(Y)\widetilde F(X,Z,\overline \xi )+\eta(Z)\widetilde F(X,Y,\overline \xi ),
\\  \\
\qquad \qquad \widetilde F(X,Y,\overline \xi )=\widetilde  F(Y,X,\overline \xi )=-\widetilde  F(\varphi X,\varphi Y,\overline \xi ), \quad \theta (\overline \xi)=0 ,
\end{array}
\end{equation}
\begin{equation}\label{4.109}
\begin{array}{ll}
\mathbb{G}_9: \widetilde F(X,Y,Z)=-\eta(Y)\widetilde F(X,Z,\overline \xi )+\eta(Z)\widetilde F(X,Y,\overline \xi ), \\ \\
\qquad \qquad \widetilde F(X,Y,\overline \xi )=-\widetilde F(Y,X,\overline \xi )=\widetilde F(\varphi X,\varphi Y,\overline \xi ) , 
\end{array}
\end{equation}
\begin{equation}\label{4.110}
\begin{array}{ll}
\mathbb{G}_{10}:\widetilde F(X,Y,Z)=-\eta(Y)\widetilde F(X,Z,\overline \xi )+\eta(Z)\widetilde F(X,Y,\overline \xi ), \\ \\
\qquad \qquad \widetilde F(X,Y,\overline \xi )=\widetilde F(Y,X,\overline \xi )=\widetilde F(\varphi X,\varphi Y,\overline \xi ),
\end{array}
\end{equation}
\begin{equation}\label{4.111}
\mathbb{G}_{11}:\widetilde  F(X,Y,Z)=\eta(X)\widetilde F(\overline \xi ,\varphi Y,\varphi Z) , 
\end{equation}
\begin{equation}\label{4.112}
\mathbb{G}_{12}:\widetilde F(X,Y,Z)=\eta(X)\left\{\eta(Y)\widetilde F(\overline \xi ,\overline \xi ,Z)-\eta(Z)\widetilde F(\overline \xi ,\overline \xi ,Y)\right\} .
\end{equation}
In \cite{ZN} the classes of $\alpha$-para-Sasakian,  $\alpha$-para-Kenmotsu, normal, paracontact metric, para-Sasakian, K-paracontact and quasi-para-Sasakian manifolds are determined. Also, the classes of the 3-dimensional almost paracontact metric manifolds are obtained. Here, we recall some of the theorems in \cite{ZN} which we need. 
\par
Let $\overline {\mathbb{G}}_5$ be the subclass of $\mathbb{G}_5$ which consists of all $(2n+1)$-dimensional $\mathbb{G}_5$-manifolds such that $\theta (\xi )=2n$
(resp. $\theta (\xi )=-2n$) by $\phi (X,Y)=g(\varphi X,Y)$ (resp. $\phi (X,Y)=g(X,\varphi Y)$).
\begin{thm}\label{Theorem A}\cite{ZN}
A $(2n+1)$-dimensional almost paracontact metric manifold $(\widetilde M,\varphi ,\overline \xi ,\\
\eta ,g)$ is: 
\par
(i) paracontact metric if and only if $\widetilde M$ belongs to  the class $\overline {\mathbb{G}}_5$  or to the classes which are direct sums of $\overline {\mathbb{G}}_5$ with $\mathbb{G}_4$ and $\mathbb{G}_{10}$;
\par
(ii) para-Sasakian if and only if $\widetilde M$ belongs to  the class $\overline {\mathbb{G}}_5$;
\par
(iii) K-paracontact metric if and only if $\widetilde M$ belongs to the classes $\overline {\mathbb{G}}_5$ and  $\overline {\mathbb{G}}_5\oplus \mathbb{G}_4$;
\par
(iv) quasi-para-Sasakian if and only if $\widetilde M$ belongs to the classes $\mathbb{G}_5$, $\mathbb{G}_8$ and \\ $\mathbb{G}_5\oplus \mathbb{G}_8$.
\end{thm}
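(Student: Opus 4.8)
The plan is to translate each of the four defining differential conditions into an equivalent algebraic condition on the structure tensor $\widetilde F$, and then to read off, from the twelve-class scheme of \cite{ZN}, which direct sum of the basic classes $\mathbb{G}_1,\dots,\mathbb{G}_{12}$ that condition singles out.

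First I would record the identities linking the Levi-Civita data to $\widetilde F$. Since $\widetilde\nabla$ is torsion-free one has $d\eta(X,Y)=\tfrac12\bigl((\widetilde\nabla_X\eta)Y-(\widetilde\nabla_Y\eta)X\bigr)$ and $d\phi(X,Y,Z)=\mathop{\s}\limits_{(X,Y,Z)}\widetilde F(X,Y,Z)$ up to the usual normalizing constant; moreover, differentiating $\varphi\overline\xi=0$ and $\eta(X)=g(X,\overline\xi)$ yields $g(\widetilde\nabla_X\overline\xi,\varphi Z)=\widetilde F(X,\overline\xi,Z)$, so that $\widetilde\nabla\overline\xi$ --- hence $d\eta$ and the Killing condition for $\overline\xi$ --- is encoded entirely by $\widetilde F(\,\cdot\,,\overline\xi,\,\cdot\,)$. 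Finally, torsion-freeness gives $N(X,Y)=(\widetilde\nabla_{\varphi X}\varphi)Y-(\widetilde\nabla_{\varphi Y}\varphi)X-\varphi(\widetilde\nabla_X\varphi)Y+\varphi(\widetilde\nabla_Y\varphi)X$, so the normality tensor $N-2\,d\eta\otimes\overline\xi$ becomes a $\varphi$-contraction of $\widetilde F$.

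Next I would argue componentwise against the characteristic conditions \eqref{4.101}--\eqref{4.112}. For (ii): the condition $\phi=d\eta$ forces the part of $\widetilde F$ not involving $\overline\xi$ into the form \eqref{4.105} with the scalar normalization $\theta(\overline\xi)=\pm 2n$, which is exactly the subclass $\overline{\mathbb{G}}_5$; the remaining freedom, living in components that involve $\overline\xi$, is then killed by imposing normality, so para-Sasakian $\Leftrightarrow\overline{\mathbb{G}}_5$. For (i): dropping normality, one may still add to \eqref{4.105} any component on which $d\eta$ is unchanged, and inspection of \eqref{4.104} and \eqref{4.110} shows these are precisely $\mathbb{G}_4$ and $\mathbb{G}_{10}$, which gives the list in (i). For (iii): requiring in addition that $\overline\xi$ be Killing annihilates the part of $\widetilde F(\,\cdot\,,\,\cdot\,,\overline\xi)$ symmetric in its first two arguments, which removes $\mathbb{G}_{10}$ but leaves $\mathbb{G}_4$, hence $\overline{\mathbb{G}}_5$ and $\overline{\mathbb{G}}_5\oplus\mathbb{G}_4$. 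For (iv): normality together with $d\phi=0$, i.e.\ $\mathop{\s}\limits_{(X,Y,Z)}\widetilde F(X,Y,Z)=0$, are two linear conditions whose common solution, matched against \eqref{4.101}--\eqref{4.112}, is $\mathbb{G}_5$, $\mathbb{G}_8$ and $\mathbb{G}_5\oplus\mathbb{G}_8$.

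The main obstacle I anticipate is bookkeeping rather than any single hard step: one needs the explicit decomposition of the space of $(0,3)$-tensors with the symmetries of $\widetilde F$ into the twelve basic components, and must then check that each translated linear condition cuts out exactly the claimed sub-sum --- in particular getting the scalar $\theta(\overline\xi)=\pm 2n$ right in passing from $\mathbb{G}_5$ to $\overline{\mathbb{G}}_5$, and verifying that the extra summands allowed in (i) and (iii) are genuinely compatible with the almost paracontact metric constraints and not merely contained in some larger class.
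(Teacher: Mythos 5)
First, a point of calibration: the paper does not prove this statement at all. Theorem~\ref{Theorem A} is quoted from \cite{ZN} as background material, so there is no internal proof to measure your sketch against. Your overall strategy --- re-expressing $d\eta$, $d\phi$, the Nijenhuis tensor and the Killing condition for $\overline \xi$ as pointwise linear conditions on $\widetilde F$, then intersecting with the twelve-class decomposition --- is the standard and essentially only route, and is presumably what \cite{ZN} does.

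That said, there is a concrete gap in your treatment of part (i). Since $g(\widetilde \nabla _X\overline \xi ,Y)=\widetilde F(X,\overline \xi ,\varphi Y)$, the form $d\eta$ depends only on the components of $\widetilde F$ with $\overline \xi $ in the second (equivalently third) slot. Your criterion ``one may still add any component on which $d\eta$ is unchanged'' therefore admits every class with $\widetilde F(\cdot ,\overline \xi ,\cdot )=0$, which includes $\mathbb{G}_1$, $\mathbb{G}_2$, $\mathbb{G}_3$ and $\mathbb{G}_{11}$ as well as $\mathbb{G}_4$ --- yet the theorem excludes the former three and $\mathbb{G}_{11}$. The missing ingredient is that $\phi =d\eta$ also forces $d\phi =0$, i.e. $\mathop{\s}\limits_{(X,Y,Z)}\widetilde F(X,Y,Z)=0$, and it is this second linear condition that cuts the ``horizontal'' part down to $\mathbb{G}_4$ (the only horizontal class with vanishing cyclic sum, cf.\ \eqref{4.104}) and eliminates $\mathbb{G}_{11}$. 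So the correct criterion in (i) is ``leaves both $d\eta$ and $d\phi$ unchanged,'' not just $d\eta$; with that repair the class $\overline {\mathbb{G}}_5\oplus \mathbb{G}_4\oplus \mathbb{G}_{10}$ does come out. Parts (ii)--(iv) follow the right lines but are asserted rather than verified; in particular for (iv) you still need to check that normality, rewritten through your torsion-free formula for $N$, is pointwise linear in $\widetilde F$ and that its kernel intersected with $\{d\phi =0\}$ meets the $\overline \xi $-classes only in $\mathbb{G}_5\oplus \mathbb{G}_8$. As a sketch of the proof of a theorem that this paper merely imports, yours is a reasonable plan, but it is not yet a proof, and the stated selection rule for (i) would, taken literally, give the wrong answer.
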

\begin{pro}\label {Proposition B}\cite{ZN}
The 3-dimensional almost paracontact metric manifolds belong to the classes $\mathbb{G}_5$,
$\mathbb{G}_6$, $\mathbb{G}_{10}$, $\mathbb{G}_{12}$ and to the classes which are their direct sums.
\end{pro}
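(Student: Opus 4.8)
The plan is to work pointwise and pin down the algebraic type of $\widetilde F_p$ in dimension $3$. Fix $p\in\widetilde M$. Since the compatible metric has signature $(2,1)$, it restricts to a metric of signature $(1,1)$ on the contact distribution $\Ker\eta$, on which $\varphi$ acts as an anti-isometric involution (because $g(\varphi X,\varphi Y)=-g(X,Y)$ there), with one-dimensional $(\pm1)$-eigenspaces. Choose $e_1\in\Ker\eta$ at $p$ with $g(e_1,e_1)=1$ and put $e_2=\varphi e_1$; then $\{e_1,e_2,\overline\xi\}$ is a pseudo-orthonormal basis with $g=\diag(1,-1,1)$ and $\varphi e_1=e_2$, $\varphi e_2=e_1$, $\varphi\overline\xi=0$. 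First I would exploit the two algebraic symmetries of $\widetilde F$: antisymmetry in the last two slots kills the ``diagonal'' components, and setting $Y=e_1$, $Z=e_2$ in $\widetilde F(X,\varphi Y,\varphi Z)=\widetilde F(X,Y,Z)+\eta(Y)\widetilde F(X,Z,\overline\xi)-\eta(Z)\widetilde F(X,Y,\overline\xi)$ gives $\widetilde F(X,e_2,e_1)=\widetilde F(X,e_1,e_2)$, hence $\widetilde F(X,e_1,e_2)=0$ for every $X$. Thus $(Y,Z)\mapsto\widetilde F(X,Y,Z)$ is carried entirely by the $\overline\xi$-direction, and expanding $Y,Z$ along the basis (using also $\widetilde F(X,\overline\xi,\overline\xi)=0$) yields the identity
\begin{equation}\label{redux3}
\widetilde F(X,Y,Z)=\eta(Z)\,\widetilde F(X,Y,\overline\xi)-\eta(Y)\,\widetilde F(X,Z,\overline\xi),
\end{equation}
valid identically on every $3$-dimensional almost paracontact metric manifold. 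Hence $\widetilde F_p$ is completely encoded by the bilinear map $T(X,Y):=\widetilde F_p(X,Y,\overline\xi)$, which satisfies $T(\,\cdot\,,\overline\xi)=0$ and therefore ranges in a $6$-dimensional space.

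Next I would decompose $T$ into irreducible blocks. Writing $X=\eta(X)\overline\xi+X'$ with $X'\in\Ker\eta$, split off $T^{(12)}(X,Y):=\eta(X)\,T(\overline\xi,Y)$ (two-dimensional). On the remaining $T$ restricted to $\Ker\eta\times\Ker\eta$ (four-dimensional) take its antisymmetric part $T^{(6)}$, one-dimensional and hence a multiple of $\phi|_{\Ker\eta}$; and its symmetric part, which further splits under the involution $\varphi|_{\Ker\eta}$ into a $\varphi$-anti-invariant summand $T^{(5)}$, one-dimensional and hence a multiple of $g|_{\Ker\eta}$ (since $g(\varphi\cdot,\varphi\cdot)=-g$ there), and a $\varphi$-invariant summand $T^{(10)}$, two-dimensional. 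This uses up all $2+1+1+2=6$ dimensions.

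It then remains to feed each $T^{(i)}$ back into \eqref{redux3} (extending the blocks $T^{(5)},T^{(6)},T^{(10)}$ so that they depend only on the $\Ker\eta$-components of their arguments) and to verify that the resulting tensor $\widetilde F^{(i)}$ satisfies exactly the defining conditions \eqref{4.112}, \eqref{4.106}, \eqref{4.105}, \eqref{4.110} of $\mathbb{G}_{12}$, $\mathbb{G}_6$, $\mathbb{G}_5$, $\mathbb{G}_{10}$ respectively. For $\mathbb{G}_{10}$ and $\mathbb{G}_{12}$ this is read off directly, since \eqref{4.110} is precisely \eqref{redux3} together with the symmetry and $\varphi$-invariance of $T^{(10)}$, and \eqref{4.112} is \eqref{redux3} applied to $T^{(12)}$. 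For $\mathbb{G}_5$ and $\mathbb{G}_6$ one computes the associated $1$-forms in the adapted frame, $\theta(\overline\xi)=\widetilde F(e_1,e_1,\overline\xi)-\widetilde F(e_2,e_2,\overline\xi)$ and $\theta^{*}(\overline\xi)=\widetilde F(e_1,e_2,\overline\xi)-\widetilde F(e_2,e_1,\overline\xi)$ (only the indices $1,2$ contribute, the frame being adapted), observes that $\theta(\overline\xi)$ detects only $T^{(5)}$ and $\theta^{*}(\overline\xi)$ only $T^{(6)}$, and checks that the proportionality constants of $T^{(5)}$ and $T^{(6)}$ are exactly those prescribed by \eqref{4.105} and \eqref{4.106} with $n=1$. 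Since $\widetilde F_p=\widetilde F^{(5)}_p+\widetilde F^{(6)}_p+\widetilde F^{(10)}_p+\widetilde F^{(12)}_p$ at every point, the components of $\widetilde F$ in $\mathbb{G}_1,\mathbb{G}_2,\mathbb{G}_3,\mathbb{G}_4,\mathbb{G}_7,\mathbb{G}_8,\mathbb{G}_9,\mathbb{G}_{11}$ all vanish, so $(\widetilde M,\varphi,\overline\xi,\eta,g)$ belongs to $\mathbb{G}_5\oplus\mathbb{G}_6\oplus\mathbb{G}_{10}\oplus\mathbb{G}_{12}$, hence to one of the classes $\mathbb{G}_5,\mathbb{G}_6,\mathbb{G}_{10},\mathbb{G}_{12}$ or to a direct sum of some of them.

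I expect the main obstacle to be the sign-and-normalisation bookkeeping in the last step: one has to keep track of the indefinite metric ($g^{22}=-1$) when forming the contractions $\theta$ and $\theta^{*}$, and to confirm that the one-dimensional ``symmetric $\varphi$-anti-invariant'' and ``antisymmetric'' blocks of $T$ are precisely the prescribed multiples of $g(\varphi\cdot,\varphi\cdot)$ and $g(\cdot,\varphi\cdot)$, with the exact factor $1/(2n)=1/2$. An alternative that sidesteps this is to quote the dimension formulas for the classes $\mathbb{G}_i$ established in \cite{ZN}: for $n=1$ precisely $\mathbb{G}_1,\mathbb{G}_2,\mathbb{G}_3,\mathbb{G}_4,\mathbb{G}_7,\mathbb{G}_8,\mathbb{G}_9,\mathbb{G}_{11}$ are zero-dimensional, which yields the claim at once.
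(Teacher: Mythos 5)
The paper does not actually prove this proposition: it is imported verbatim from \cite{ZN} as background, so there is no in-paper argument to compare yours against. Judged on its own, your reconstruction is correct and is essentially the argument one would extract from \cite{ZN} specialised to $n=1$. The two pivotal steps both check out: (a) in the adapted frame $\{e_1,e_2=\varphi e_1,\overline\xi\}$ (which exists because $g(e_1,\varphi e_1)=0$ follows from the antisymmetry of $\phi$, and $g(\varphi e_1,\varphi e_1)=-1$), the symmetry $\widetilde F(X,\varphi Y,\varphi Z)=\widetilde F(X,Y,Z)+\eta(Y)\widetilde F(X,Z,\overline\xi)-\eta(Z)\widetilde F(X,Y,\overline\xi)$ with $Y=e_1$, $Z=e_2$ forces $\widetilde F(X,e_1,e_2)=0$, so the whole tensor collapses onto $T(X,Y)=\widetilde F(X,Y,\overline\xi)$ as in your identity; and (b) your four blocks of $T$ do land in the right classes — in particular the trace computations $\theta(\overline\xi)=T(e_1,e_1)-T(e_2,e_2)$ and $\theta^*(\overline\xi)=T(e_1,e_2)-T(e_2,e_1)$ with $g^{22}=-1$ give exactly the normalisations $\theta(\overline\xi)/2n$ and $\theta^*(\overline\xi)/2n$ demanded by \eqref{4.105} and \eqref{4.106} when $n=1$, and the conditions \eqref{4.110}, \eqref{4.112} are literally your reduction identity plus the symmetry/invariance of $T^{(10)}$, $T^{(12)}$. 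The only point you lean on without saying so is that the twelve defining conditions of \cite{ZN} describe pairwise complementary subspaces, so that exhibiting $\widetilde F$ as a sum of components in $\mathbb{G}_5$, $\mathbb{G}_6$, $\mathbb{G}_{10}$, $\mathbb{G}_{12}$ really does force the remaining eight components to vanish; that is part of the classification theorem you are allowed to assume here, so it is a presentational remark rather than a gap. Your alternative closing suggestion (quoting the dimension count of the $\mathbb{G}_i$ for $n=1$) is also a legitimate shortcut, and in fact your explicit $2+1+1+2=6$ bookkeeping is precisely why those dimension formulas hold.
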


Let $(\overline M,P,\overline g)$ be a $2n$-dimensional almost para-Hermitian manifold and $\widetilde M$ be a $(2n-1)$-dimensional differentiable hypersurface embeding in $\overline M$ such that the normal vector field $N$ to $\widetilde M$ is a time-like unit, i.e. $\overline g(N,N)=-1$. Hence, $PN$ is a space-like unit tangent vector field on $\widetilde M$. We denote the tangent and the normal component of the transform vector field $PX$ of an arbitrary tangent vector field $X\in \chi(\widetilde M)$ by $\varphi X$ and
$\eta (X)N$, respectively. Then $PX\in \chi (\widetilde M)$ has the unique decomposition $PX=\varphi X+\eta (X)N$, where $\varphi $ is an $(1,1)$ tensor field on $\widetilde M$.  The 1-form $\eta$  on 
$\widetilde M$ is defined by $\eta (X)=\overline g(X,PN)$. So, at every point $p\in \widetilde M$ is determined the structure $(\varphi ,\overline \xi ,\eta ,g)$, where
\begin{equation}\label{4.1} 
\varphi X=PX-\eta (X)N, \quad \overline \xi=PN, \quad \eta (X)=\overline g(X,PN), \quad X\in \chi (\widetilde M)
\end{equation}
and by $g$ is denoted the restriction of $\overline g$ on $\widetilde M$. It is easy to check that 
$(\varphi ,\overline \xi ,\eta ,g)$ is an almost paracontact metric structure on $\widetilde M$, i.e. 
$(\widetilde M,\varphi ,\overline \xi ,\eta ,g)$ is a $(2n-1)$-dimensional almost  paracontact metric manifold. 
\par
Let $\overline \nabla$ and $\widetilde \nabla $ be the Levi-Civita connections of the metrics $\overline g$ and $g$ on $\overline M$ and $\widetilde M$, respectively. Then the formulas of Gauss and Weingarten are:
\begin{equation}\label{4.2} 
\overline \nabla _XY=\widetilde \nabla _XY-g(A_NX,Y)N , \quad \overline \nabla _XN=-A_NX, \quad X, Y \in \chi (\widetilde M),
\end{equation}
where $A_N$ is the second fundamental tensor of $\widetilde M$ corresponding to $N$.
\par
Using \eqref{4.1} and  \eqref{4.2} we obtain
\begin{equation}\label{4.3} 
\overline F(X,Y,Z)=\widetilde F(X,Y,Z)-\eta (Y)g(A_NX,Z)+\eta (Z)g(A_NX,Y) ,
\end{equation}
\begin{equation}\label{4.4}
\overline F(X,Y,N)=\widetilde F(X,\varphi Y,\overline \xi )+g(A_NX,\varphi Y) ,
\end{equation}
where $X,Y,Z\in \chi (\widetilde M)$ and $\overline F$, $\widetilde F$ are the tensor fields on $\overline M$ and $\widetilde M$, defined by $\overline F(X,Y,Z)=\overline g((\overline \nabla _XP)Y,Z)$, \,
$\widetilde F(X,Y,Z)=g((\widetilde \nabla _X\varphi )Y,Z)$, respectively.
Let us assume that the $n$-dimensional manifold $M$ is endowed with both a symmetric linear connection 
$\nabla $ and with a globally defined nowhere zero vector field $\xi $ which is parallel with respect to 
$\nabla $, i.e. $\nabla \xi =0$ and $f$ is a function on $M$. 
\par
We consider the function $\widetilde  f: T^*M\longrightarrow  \mathbb{R}$ defined by
\[
\widetilde f=\xi ^V+f^V ,
\]
or equivalently by $\widetilde f(x,\omega )=\omega _x(\xi _x)+f(x)$ for any $(x,\omega )\in T^*M$.
\par
Let 
\[
\widetilde H_t=\widetilde f^{-1}(t)=\{(x,\omega )\in T^*M : \widetilde f(x,\omega )=
t, \, t\in \mathbb{R}\}
\]
be the hypersurfaces level set in $T^*M$, endowed with the restriction $g$  of the proper natural Riemann extension $\overline g$ on $T^*M$, where $f(x)\neq t$ at any point $x$ in $M$.
\par
For later use, we recall that the gradient of a real function $F : N\longrightarrow  \mathbb{R}$ on a (semi-)
Riemannian manifold $(N,h)$ is given by $h({\rm grad}F,X)={\rm d}F(X)$, $X\in \chi (N)$ and $h$ is a (semi-) Riemannian metric on $N$. In \cite{BK} the following formula for the gradient of the vertical lift $Z^V$ on $T^*M$ of $Z\in \chi (M)$ with respect to the proper natural Riemann extension $\overline g$ on 
$T^*M$ is obtained:
\begin{equation}\label{4.8}
{\rm grad}Z^V=\frac{1}{a}\left\{Z^C+2C(\nabla Z)-\frac{b}{a}Z^VW\right\} .
\end{equation}
\begin{thm}\label{Theorem 4.2}
Let $(M,\nabla )$ be a manifold endowed with a symmetric linear connection $\nabla $ inducing the proper natural  Riemann extension $\overline g$ on $T^*M$ and $f$ be a function on $M$. If $t\in \mathbb{R}$ and $f(x)\neq t$ at any point $x$ in $M$, then:
\par
(i) At any point $(x,\omega )$ of $\widetilde H_t$ the gradient of the function  $\widetilde f$ is a normal vector field to $\widetilde H_t$ and it is given by
\begin{equation}\label{4.9}
{\rm grad}\widetilde f=\frac{1}{a}\left\{\xi ^C-\frac{b}{a}\xi ^VW+({\rm d}f)^V\right\} .
\end{equation}
\par
(ii) The restriction $g$ of $\overline g$ on $\widetilde H_t$ is non-degenerate on $\widetilde H_t$, i.e. 
$(\widetilde H_t,g)$ is a semi-Riemannian hypersurface of $T^*M$.
\par
(iii) The vertical lift $\alpha ^V$ of an 1-form $\alpha $ on $M$ and the complete lift $X^C$ of $X\in \chi (M)$ are tangent to $\widetilde H_t$ if at any point $(x,\omega )\in \widetilde H_t$ they satisfy the conditions:
\begin{equation}\label{4.10}
\alpha _x(\xi _x)=0 , \quad (Xf)(x)=\omega _x((\nabla _\xi X)_x) .
\end{equation}
\end{thm}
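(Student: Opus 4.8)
The plan is to deduce all three parts from a single explicit formula for ${\rm grad}\,\widetilde f$ with respect to $\overline g$ and from the identification $T_{(x,\omega)}\widetilde H_t=\ker({\rm d}\widetilde f)_{(x,\omega)}$. For part (i), since the gradient is $\mathbb{R}$-linear and $\widetilde f=\xi^V+f^V$, I would compute the gradient of each summand. For $\xi^V$, formula \eqref{4.8} gives ${\rm grad}\,\xi^V=\frac1a\{\xi^C+2C(\nabla\xi)-\frac ba\xi^VW\}$, and since $\xi$ is $\nabla$-parallel the $(1,1)$-tensor $\nabla\xi$ is zero, hence $C(\nabla\xi)=0$ and ${\rm grad}\,\xi^V=\frac1a\{\xi^C-\frac ba\xi^VW\}$. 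For $f^V$, I would determine ${\rm grad}\,f^V$ by testing the defining relation $\overline g({\rm grad}\,f^V,\overline X)=\overline X(f^V)$ against the generating fields $\alpha^V$ and $X^C$: using $\alpha^V(f^V)=0$, $X^C(f^V)=(Xf)^V$ and the values $\overline g(X^C,\alpha^V)=a\alpha_x(X_x)$, $\overline g(\alpha^V,\beta^V)=0$ from \eqref{3.1}, one is forced to ${\rm grad}\,f^V=\frac1a({\rm d}f)^V$. Adding the two yields \eqref{4.9}. That ${\rm grad}\,\widetilde f$ is normal to $\widetilde H_t$ is automatic, since for any $\overline X$ tangent to a level set of $\widetilde f$ one has $\overline g({\rm grad}\,\widetilde f,\overline X)=\overline X(\widetilde f)=0$.

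For part (ii), I would compute $\overline g({\rm grad}\,\widetilde f,{\rm grad}\,\widetilde f)$ from \eqref{4.9} using \eqref{3.1} and the orthogonality relations \eqref{3.7} (together with $W=C({\rm Id})$, which follows from \eqref{3.5}, \eqref{3.6}). The relations \eqref{3.7} annihilate every term involving $\overline g(W,W)$, $\overline g(W,\alpha^V)$ or $\overline g(\alpha^V,\beta^V)$, so only $\overline g(\xi^C,\xi^C)=b(\xi^V)^2$ (again using $\nabla_\xi\xi=0$), $\overline g(\xi^C,W)=a\,\xi^V$ and $\overline g(\xi^C,({\rm d}f)^V)=a\,({\rm d}f)(\xi)$ survive, and the norm collapses to $\frac1{a^2}\bigl(2a\,({\rm d}f)(\xi)-b(\xi^V)^2\bigr)$. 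On $\widetilde H_t$ one has $\xi^V=t-f^V$, which is nowhere zero since $f(x)\neq t$; together with $b\neq0$ (properness of $\overline g$) this makes $\overline g({\rm grad}\,\widetilde f,{\rm grad}\,\widetilde f)$ nonvanishing on $\widetilde H_t$. Moreover ${\rm grad}\,\widetilde f$ is itself nowhere zero, because $\xi^C$ is the only summand in \eqref{4.9} with a nonzero $\partial_i$-component and $\xi$ is nowhere zero; hence $t$ is a regular value and $\widetilde H_t$ is a smooth hypersurface. Finally, since ${\rm grad}\,\widetilde f$ is a non-null normal field, the standard orthogonal splitting $T_{(x,\omega)}T^*M=\mathbb{R}\,{\rm grad}\,\widetilde f\oplus T_{(x,\omega)}\widetilde H_t$ with non-degenerate first factor shows the induced metric $g$ is non-degenerate, i.e. $(\widetilde H_t,g)$ is a semi-Riemannian hypersurface.

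For part (iii), $\overline X\in\chi(T^*M)$ is tangent to $\widetilde H_t$ at $(x,\omega)$ precisely when $\overline X(\widetilde f)(x,\omega)=0$. Taking $\overline X=\alpha^V$, the lift identities give $\alpha^V(\widetilde f)=\alpha^V(\xi^V)+\alpha^V(f^V)=(\alpha(\xi))^V$, which vanishes iff $\alpha_x(\xi_x)=0$. Taking $\overline X=X^C$, one gets $X^C(\widetilde f)=X^C(\xi^V)+X^C(f^V)=[X,\xi]^V+(Xf)^V$; since $\nabla$ is torsion-free and $\nabla\xi=0$, $[X,\xi]=\nabla_X\xi-\nabla_\xi X=-\nabla_\xi X$, so $X^C(\widetilde f)(x,\omega)=0$ reads $(Xf)(x)=\omega_x((\nabla_\xi X)_x)$; these are exactly the conditions \eqref{4.10}. (Equivalently one could pair ${\rm grad}\,\widetilde f$ from \eqref{4.9} with $\alpha^V$ and $X^C$ via \eqref{3.1} and \eqref{3.7}, obtaining the same relations.) The one step that really needs care is the non-nullity claim in part (ii): the algebra is immediate thanks to the many zeros in \eqref{3.7}, but the conclusion that $(\widetilde H_t,g)$ is non-degenerate genuinely relies on the hypotheses $f\neq t$ and $b\neq0$ to keep $\overline g({\rm grad}\,\widetilde f,{\rm grad}\,\widetilde f)$ away from zero on $\widetilde H_t$ (and, if the time-like unit normal used afterwards is wanted, to pin down its sign).
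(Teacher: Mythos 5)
Your parts (i) and (iii) follow the paper's own argument essentially verbatim: the same splitting ${\rm grad}\,\widetilde f={\rm grad}\,\xi^V+{\rm grad}\,f^V$, the same use of \eqref{4.8} together with $\nabla\xi=0$, the same determination of ${\rm grad}\,f^V=\frac1a({\rm d}f)^V$ by testing the defining relation against the generators $\alpha^V$ and $X^C$, and the same tangency conditions (the paper pairs with ${\rm grad}\,\widetilde f$ via \eqref{3.1} and \eqref{4.9}, you differentiate $\widetilde f$ directly; the two routes are equivalent, as you yourself observe). Your extra remark that ${\rm grad}\,\widetilde f$ is nowhere zero, so that $t$ is a regular value and $\widetilde H_t$ is a smooth hypersurface, is a welcome point the paper leaves implicit.

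The genuine problem is in part (ii). Your expansion of $\overline g({\rm grad}\,\widetilde f,{\rm grad}\,\widetilde f)$ is the correct one: besides $\overline g(\xi^C,\xi^C)=b(\xi^V)^2$ and $\overline g(\xi^C,W)=a\,\xi^V$ there is the cross term $\frac{2}{a^2}\,\overline g\bigl(\xi^C,({\rm d}f)^V\bigr)=\frac{2}{a}\,{\rm d}f(\xi)$, which yields $\frac1{a^2}\bigl(2a\,{\rm d}f(\xi)-b(\xi^V)^2\bigr)$. (The paper's own proof states the value as $-b(\omega(\xi))^2/a^2$, i.e.\ it silently drops this cross term.) But then your conclusion does not follow from your formula: $f(x)\neq t$ gives $\xi^V=t-f\neq0$ and $b\neq0$ makes the second summand nonzero, yet nothing in the hypotheses prevents $2a\,(\xi f)(x)=b\,(t-f(x))^2$ at some points of $\widetilde H_t$, where the normal becomes null and the induced metric degenerates. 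So you must either justify ${\rm d}f(\xi)=0$ (which is not implied by the assumptions --- $f$ is an arbitrary function and $\xi$ is only assumed $\nabla$-parallel), or concede that part (ii) needs the additional hypothesis $2a\,(\xi f)\neq b(\xi^V)^2$ along $\widetilde H_t$. As written, the non-degeneracy step is a gap in your argument --- and, for what it is worth, the same missing term affects the paper's own computation, on which the later normalization \eqref{4.11} depends.
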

\begin{proof}
(i) By using $\overline g({\rm grad}\widetilde f,\overline U)=({\rm d}\widetilde f)(\overline U)$ for any tangent vector field $\overline U$ on $T^*M$ and $\widetilde f(x,\omega )=t\in \mathbb{R}$ at any point 
$(x,\omega )\in \widetilde H_t$, we obtain that $\overline g({\rm grad}\widetilde f,U)=0$ for any vector field $U$ on $\widetilde H_t$. Therefore, ${\rm grad}\widetilde f$ is a normal vector field to 
$\widetilde H_t$.
\par
From the definition of the function $\widetilde f$ it follows that ${\rm grad}\widetilde f={\rm grad}\xi ^V+{\rm grad}f^V$. For ${\rm grad}\xi ^V$, using \eqref{4.8} and taking into account that $\nabla \xi =0$, we have
\[
{\rm grad}\xi ^V=\frac{1}{a}\left\{\xi ^C-\frac{b}{a}\xi ^VW\right\} .
\]
Now, let us assume that ${\rm grad}f^V=Y^C+\theta ^V$, where $Y\in \chi (M)$ and $\theta $ is an 1-form on $M$. Substituting ${\rm grad}f^V=Y^C+\theta ^V$ in the equality $\overline g({\rm grad}f^V,\alpha ^V)=\alpha ^V(f^V)=0$ we obtain $a\alpha (Y)=0$ for any 1-form $\alpha $ on $M$, which implies $Y=0$. Then from $\overline g({\rm grad}f^V,X^C)=\overline g(\theta ^V,X^C)=a\theta (X)$ and
$\overline g({\rm grad}f^V,X^C)=X^C(f^V)=(Xf)^V=(({\rm d}f)X)^V$ it follows that 
$\theta =\frac{1}{a}{\rm d}f$. Hence, ${\rm grad}f^V=\frac{1}{a}({\rm d}f)^V$ and \eqref{4.9} holds.
\par
(ii) For the normal vector field ${\rm grad}\widetilde f$ to $\widetilde H_t$ we compute $\overline g({\rm grad}\widetilde f,{\rm grad}\widetilde f)=-\frac{b(\omega (\xi ))^2}{a^2}$, which shows that ${\rm grad}\widetilde f$ is time-like or space-like when $b>0$ or $b<0$, respectively. Consequently, (ii) is proved.
\par
(iii)  $\alpha ^V$ and $X^C$ are tangent to $\widetilde H_t$ if at any point $(x,\omega )\in \widetilde H_t$
$\overline g({\rm grad}\widetilde f,X^C)=\overline g({\rm grad}\widetilde f,\alpha ^V)=0$. By using \eqref{4.9} we obtain the equalities in \eqref{4.10}.
\end{proof}
Further, we consider a hypersurface $\widetilde H_t$ of $(T^*M,P,\overline g)$ with a time-like unit normal vector field $N$. According to \thmref{Theorem 4.2},  ${\rm grad}\widetilde f$ is a normal vector field to 
$\widetilde H_t$ and it is time-like if $b>0$. Hence,
\begin{equation}\label{4.11}
N=\frac{1}{\sqrt{b}\xi ^V}\left\{\xi ^C-\frac{b}{a}\xi ^VW+({\rm d}f)^V\right\} .
\end{equation}
Supplying  $\widetilde H_t$ with the almost paracontact metric structure defined by \eqref{4.1}, we have:
\begin{equation}\label{4.12}
\begin{array}{lll}
\displaystyle \overline \xi =\frac{1}{\sqrt{b}\xi ^V}\left\{\xi ^C-({\rm d}f)^V\right\}, \quad \eta (X^C)=-\frac{2a}{\sqrt{b}\xi ^V}(Xf)^V+\sqrt{b}X^V,  \\ \\
\displaystyle \varphi X^C=X^C+2C(\nabla X)-\frac{2a}{\sqrt{b}\xi ^V}(Xf)^VW-\eta (X^C)\frac{1}{\sqrt{b}\xi ^V}\left\{\xi ^C+({\rm d}f)^V\right\} , \\
\eta (\alpha ^V)=0, \qquad  \varphi \alpha ^V=-\alpha ^V .
\end{array}
\end{equation}
\begin{thm}\label{Theorem 4.3}
For the $(2n-1)$-dimensional almost paracontact metric manifold $(\widetilde H_t,\varphi ,\overline \xi ,\eta ,g)$ of $(T^*M,P,\overline g)$ with a time-like unit normal vector field $N$ and an almost paracontact metric structure given by \eqref{4.11}  and \eqref{4.12}, respectively, we have:
\par
(i) If $M$ is flat or ${\rm dim}M=2$, then $\widetilde H_t \in \mathbb{G}_5\oplus \mathbb{G}_{10}$.
\par
(ii) If $M$ is not flat and ${\rm dim}M>2$, then $\widetilde H_t \in \mathbb{G}_4\oplus \mathbb{G}_5\oplus \mathbb{G}_{10}$.\\
In both cases (i) and (ii) $\widetilde H_t$ is paracontact metric if and only if $b=4a^2$.
\end{thm}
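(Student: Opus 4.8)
The plan is to compute the fundamental tensor $\widetilde F$ of $(\widetilde H_t,\varphi,\overline\xi,\eta,g)$ explicitly, identify its type against the characteristic conditions \eqref{4.104}, \eqref{4.105}, \eqref{4.110}, and then apply \thmref{Theorem A}(i) for the last assertion.

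The first step is to compute the second fundamental tensor $A_N$ of $\widetilde H_t$ with respect to the timelike unit normal $N$ of \eqref{4.11}. Writing $N=\tfrac{1}{\sqrt b\,\xi^V}\bigl\{\xi^C-\tfrac ba\xi^VW+({\rm d}f)^V\bigr\}$ and using the Weingarten formula $\overline\nabla_{\overline X}N=-A_N\overline X$ for $\overline X$ tangent to $\widetilde H_t$, I would differentiate with the connection formulas \eqref{3.8}, exploiting the considerable simplifications produced by $\nabla\xi=0$ and keeping the differential of the non-constant scalar $\tfrac{1}{\sqrt b\,\xi^V}$; the tangential component of $\overline\nabla_{\overline X}N$ then gives $A_N$. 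This computation, together with the subsequent substitution into \eqref{4.3}--\eqref{4.4}, is the main obstacle.

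Next I would substitute $A_N$ and the explicit $\overline F$ of \eqref{3.11} into \eqref{4.3}--\eqref{4.4}, evaluated on the tangent lifts $\alpha^V$ (with $\alpha(\xi)=0$) and $X^C$ (with $(Xf)=\omega(\nabla_\xi X)$) and on $\overline\xi=\tfrac{1}{\sqrt b\,\xi^V}\{\xi^C-({\rm d}f)^V\}$, which yields $\widetilde F$. Two observations organize the result. First, $\overline F$ vanishes on any triple containing a vertical lift and equals $2a\,\omega(R(\cdot,\cdot)\cdot)$ on triples of complete lifts, while $\nabla\xi=0$ gives $R(\cdot,\cdot)\xi=0$ and, via the first Bianchi identity, $R(\xi,Y)X=R(\xi,X)Y$; hence the curvature enters $\widetilde F$ only through $\omega(R(Z,Y)X)$. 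Second, from \eqref{3.1}, $\overline g(\alpha^V,\beta^V)=0$ and $\overline g(X^C,\alpha^V)=a\,\alpha(X)$, so the horizontal-horizontal block of the inverse metric is zero and every $\overline g$-trace over $\ker\eta$ annihilates the curvature terms. Consequently $\widetilde F$ decomposes into a curvature part — of $\mathbb G_4$-type on $\ker\eta^{\otimes 3}$ (cyclic sum zero by Bianchi) and of $\mathbb G_{10}$-type in its $\widetilde F(\cdot,\cdot,\overline\xi)$-block (symmetric, $\varphi$-invariant, trace-free) — and a shape-operator part, whose $\widetilde F(\cdot,\cdot,\overline\xi)$-block is symmetric and splits into a $\mathbb G_5$-term with coefficient $\theta(\overline\xi)/2n$ depending only on $a,b$ and a further $\mathbb G_{10}$-term. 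Matching with \eqref{4.104}, \eqref{4.105}, \eqref{4.110} gives $\widetilde H_t\in\mathbb G_4\oplus\mathbb G_5\oplus\mathbb G_{10}$. The $\mathbb G_4$-summand is built from $R$: it vanishes when $M$ is flat, and also when $\dim M=2$ because then the horizontal content of $\ker\eta$ is one-dimensional, so $R$ vanishes on it, which gives (i); for $\dim M>2$ with $M$ not flat it does not vanish in general, which gives (ii). (For $\dim M=2$ one may alternatively use \propref{Proposition B} together with the mutual orthogonality of the twelve basic classes to rule out a $\mathbb G_4$-component.)

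Finally, since $\widetilde H_t$ has no component outside $\mathbb G_4\oplus\mathbb G_5\oplus\mathbb G_{10}$, \thmref{Theorem A}(i) shows it is paracontact metric if and only if its $\mathbb G_5$-component lies in $\overline{\mathbb G}_5$, i.e. $\theta(\overline\xi)=-2n$ for the convention $\phi(X,Y)=g(X,\varphi Y)$ used here. By the vanishing of the horizontal-horizontal block of the inverse metric, $\theta(\overline\xi)$ is insensitive to the curvature of $M$ and depends only on $a$ and $b$ through $A_N$; hence $\theta(\overline\xi)=-2n$ reduces to a single scalar equation in $a$ and $b$ whose unique solution with $a>0$, $b>0$ is $b=4a^2$, the same in cases (i) and (ii).
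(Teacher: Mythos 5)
Your plan follows essentially the same route as the paper: compute $A_N$ from the Weingarten formula using \eqref{3.8} and $\nabla\xi=0$, substitute into \eqref{4.3}, split the resulting $\widetilde F$ into pieces matching \eqref{4.104}, \eqref{4.105}, \eqref{4.110}, and finish with \thmref{Theorem A}(i). (The paper parcels the entire curvature contribution, including its $\eta$-correction terms, into a single $\mathbb{G}_4$-tensor $\widetilde F^\prime$ whose $\overline\xi$-block vanishes identically because $\omega(R(\xi,Y)X)+\omega(R(Y,\xi)X)=0$; your bookkeeping assigns part of it to $\mathbb{G}_{10}$, but the total class $\mathbb{G}_4\oplus\mathbb{G}_5\oplus\mathbb{G}_{10}$ comes out the same.)

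Two points need repair. First, your primary argument for ${\rm dim}\,M=2$ --- that ``the horizontal content of $\ker\eta$ is one-dimensional, so $R$ vanishes on it'' --- does not hold: for a symmetric connection on a surface with a parallel nowhere-zero $\xi$ one only gets $R(\cdot,\cdot)\xi=0$, and $\omega(R(Z,Y)X)$ need not vanish pointwise on the relevant tangent directions. What actually kills the $\mathbb{G}_4$-component in dimension $3$ is \propref{Proposition B} together with the directness of the decomposition into basic classes; this is exactly your parenthetical fallback, and it is the paper's argument, so it should be promoted to the main argument. Second, the equivalence with $b=4a^2$ is asserted rather than derived: you must actually compute the Lee form of the $\mathbb{G}_5$-piece. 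The paper finds $\theta_{\widetilde F^{''}}(\overline\xi)=-(n-1)\sqrt{b}/a$ and, since ${\rm dim}\,\widetilde H_t=2n-1$, the $\overline{\mathbb{G}}_5$ condition for the convention $\phi(X,Y)=g(X,\varphi Y)$ reads $\theta(\overline\xi)=-2(n-1)$ (not $-2n$ as written in your last step), whence $\sqrt{b}=2a$. Your structural observation that $\theta(\overline\xi)$ is insensitive to the curvature is correct and explains why the criterion is the same in cases (i) and (ii), but without the explicit value the final claim is not established.
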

\begin{proof}
From \eqref{4.3} for the tensor $\widetilde F$ on $\widetilde H_t$ we have
\begin{equation}\label{4.13}
\widetilde F(\widetilde X,\widetilde Y,\widetilde Z)=\overline F(\widetilde X,\widetilde Y,\widetilde Z)+\eta (\widetilde Y)g(A_N\widetilde X,\widetilde Z)-\eta (\widetilde Z)g(A_N\widetilde X,\widetilde Y), 
\end{equation}
where $\widetilde X,\widetilde Y,\widetilde Z \in \chi (\widetilde H_t)$.\\
For arbitrary $X^C\in \chi (\widetilde H_t)$ and  $\alpha ^V\in \chi (\widetilde H_t)$, using \eqref{3.8}, we find
\begin{equation}\label{4.14}
\begin{array}{ll}
\displaystyle A_NX^C=-\overline \nabla _{X^C}N=-\frac{1}{\sqrt{b}\xi ^V}\left\{C(R(.,\xi )X)+(\nabla _X{\rm d}f)^V\right\} \\
\displaystyle \qquad \qquad \qquad \qquad \, \, \, +\frac{\sqrt{b}}{2a}\left\{X^C+\eta (X^C)\overline \xi \right\}-
\displaystyle \frac{2(\nabla _\xi X)^V}{\sqrt{b}(\xi ^V)^2}({\rm d}f)^V,
\end{array}
\end{equation}
\begin{equation}\label{4.15}
A_N\alpha ^V=-\overline \nabla _{\alpha ^V}N=\frac{\sqrt{b}}{2a}\alpha ^V .
\end{equation}
Next, we calculate 
\begin{equation}\label{4.16}
\begin{array}{ll}
\qquad \qquad \qquad \, \, \displaystyle \overline g_{(x,\omega )}(A_NX^C,Z^C)=-\frac{a}{\sqrt{b}\omega (\xi )}\left\{\omega (R(Z,\xi )X)+X(Zf)\right. \\
\displaystyle \qquad \qquad \qquad \left.-(\nabla _XZ)(f)\right\}_{(x,\omega )} 
+\frac{\sqrt{b}}{2a}\left\{\overline g(X^C,Z^C)+\eta (X^C)\eta (Z^C)\right\}_{(x,\omega )}
\\ \\
\displaystyle\qquad \qquad \qquad -\left\{\frac{2a\omega (\nabla _\xi X)}{\sqrt{b}(\omega (\xi ))^2}(Zf)\right\}_{(x,\omega )} .
\end{array}
\end{equation}
From the first identity of Bianchi and $\nabla \xi =0$ we get
\begin{equation}\label{4.17}
R(Z,\xi )X=R(X,\xi )Z , \quad X, \xi , Z\in \chi (M) .
\end{equation}
Since $C(\nabla Z)$ is a vertical vector field on $T^*M$ and for $X^C\in \chi (\widetilde H_t)$ the following equality 
\begin{equation}\label{4.18}
(\nabla _\xi X)^V=(Xf)^V
\end{equation}
holds, we obtain
\begin{equation}\label{4.19}
\omega _x\left(\left(\nabla _{\nabla _\xi X}Z\right)_x\right)=C(\nabla Z)\left((\nabla _\xi X)^V\right)_{(x,\omega )}=C(\nabla Z)\left((Xf)^V\right)_{(x,\omega )}=0 .
\end{equation}
The equalities \eqref{4.17} and \eqref{4.19} imply
\begin{equation}\label{4.20}
\omega _x(R_x(Z,\xi )X)=\omega _x\left((\nabla _X\nabla _\xi Z)_x\right)-\omega _x\left((\nabla _\xi \nabla _X Z)_x\right) .
\end{equation}
By using \eqref{4.18} and \eqref{4.19} we get 
\begin{equation}\label{4.21}
\begin{array}{ll}
(X(Zf))_x=X^C((Zf)^V)_{(x,\omega )}=X^C((\nabla _\xi Z)^V)_{(x,\omega )}=\left[X,\nabla _\xi Z\right]^V_{(x,\omega )}\\
\qquad \qquad \, \, \, =\omega _x\left((\nabla _X\nabla _\xi Z)_x\right) .
\end{array}
\end{equation}
Now, we substitute \eqref{4.21} and $\left((\nabla _XZ)(f)\right)_x=\omega _x\left((\nabla _\xi \nabla _X Z)_x\right)$ in \eqref{4.16}. Then, taking into account \eqref{4.10} and \eqref{4.20}, the equality \eqref{4.16} becomes
\[
\begin{array}{lll}
\displaystyle \overline g_{(x,\omega )}(A_NX^C,Z^C)=-\frac{2a}{\sqrt{b}\omega (\xi )}\left\{\omega (R(Z,\xi )X)\right\}_{(x,\omega )} \\
\displaystyle \qquad \qquad \qquad \qquad+\frac{\sqrt{b}}{2a}\left\{\overline g(X^C,Z^C)+\eta (X^C)\eta (Z^C)\right\}_{(x,\omega )}
\\ \\
\displaystyle\qquad \qquad \qquad \qquad -\left\{\frac{2a}{\sqrt{b}(\omega (\xi ))^2}(Xf)(Zf)\right\}_{(x,\omega )} .
\end{array}
\]
By using \eqref{3.11}, \eqref{4.13} and the above equality we obtain
\begin{equation}\label{4.22}
\begin{array}{lll}
\qquad \qquad \qquad \displaystyle\widetilde F_{(x,\omega )}(X^C,Y^C,Z^C)=\frac{2a}{\sqrt{b}\omega (\xi )}\left\{
\sqrt{b}\omega (\xi )\omega (R(Z,Y)X)\right. \\ \\
\qquad \qquad \qquad  \, \, \, \left.\displaystyle-\omega (R(Z,\xi )X)\eta (Y^C)
+\omega (R(Y,\xi )X)\eta (Z^C)\right\}_
{(x,\omega )}\\ \\
\qquad \qquad \qquad  \, \, \, \displaystyle +\frac{\sqrt{b}}{2a}\left\{\overline g(X^C,Z^C)\eta (Y^C)-\overline g(X^C,Y^C)\eta (Z^C)\right\}_{(x,\omega )} \\ \\
\qquad \qquad \qquad  \, \, \, \displaystyle +\frac{2a}{\sqrt{b}(\omega (\xi ))^2}\left\{
-(Xf)(Zf)\eta (Y^C)+(Xf)(Yf)\eta (Z^C)\right\}_{(x,\omega )} .
\end{array}
\end{equation}
After standard calculations , using \eqref{4.14} and \eqref{4.15}, we find
\begin{equation}\label{4.23}
\begin{array}{lll}
\displaystyle\widetilde F_{(x,\omega )}(X^C,Y^C,\gamma ^V)=\frac{\sqrt{b}}{2}\left(\gamma (X)\eta (Y^C)\right)_{(x,\omega )}, \\ \\
\displaystyle\widetilde F_{(x,\omega )}(\alpha ^V,Y^C,Z^C)=\frac{\sqrt{b}}{2}\left(\alpha (Z)\eta (Y^C)-
\alpha (Y)\eta (Z^C)\right)_{(x,\omega )}, \\ \\
\widetilde F_{(x,\omega )}(\alpha ^V,\beta ^V,Z^C)=\widetilde F_{(x,\omega )}(X^C,\beta ^V,\gamma ^V)=\widetilde F_{(x,\omega )}(\alpha ^V,\beta ^V,\gamma ^V)=0 .
\end{array}
\end{equation}
Finally, using \eqref{4.22} and \eqref{4.23}, we obtain
\begin{equation}\label{4.24}
\begin{array}{ll}
\widetilde F_{(x,\omega )}(X^C+\alpha ^V,Y^C+\beta ^V,Z^C+\gamma ^V) \\
=\left(\widetilde F^\prime +\widetilde F^{''}+\widetilde F^{'''}\right)_{(x,\omega )}(X^C+\alpha ^V,Y^C+\beta ^V,Z^C+\gamma ^V) ,
\end{array}
\end{equation}
where
\begin{equation}\label{4.25}
\begin{array}{ll}
\displaystyle\widetilde F^\prime _{(x,\omega )}(X^C+\alpha ^V,Y^C+\beta ^V,Z^C+\gamma ^V)=\frac{2a}{\sqrt{b}\omega (\xi )}\left\{\sqrt{b}\omega (\xi )\omega (R(Z,Y)X)\right. \\ \\
\qquad \qquad \qquad  \qquad \, \, \, \displaystyle \left.\displaystyle-\omega (R(Z,\xi )X)\eta (Y^C)
+\omega (R(Y,\xi )X)\eta (Z^C)\right\}_{(x,\omega )},
\end{array}
\end{equation}
\begin{equation}\label{4.26}
\begin{array}{ll}
\displaystyle \widetilde F^{''} _{(x,\omega )}(X^C+\alpha ^V,Y^C+\beta ^V,Z^C+\gamma ^V) \\
=\displaystyle -\frac{\sqrt{b}}{2a}\left\{-\eta (Y^C)\left[\overline g(X^C,Z^C)+a\alpha (Z)+a\gamma (X)\right]\right. \\ \\
\displaystyle \left.+\eta (Z^C)\left[\overline g(X^C,Y^C)+a\alpha (Y)+a\beta (X)\right]\right\}_{(x,\omega )},
\end{array}
\end{equation}
\begin{equation}\label{4.27}
\begin{array}{ll}
\displaystyle \widetilde F^{'''} _{(x,\omega )}(X^C+\alpha ^V,Y^C+\beta ^V,Z^C+\gamma ^V) \\
\displaystyle =\frac{2a}{\sqrt{b}(\omega (\xi ))^2}\left\{
-(Xf)(Zf)\eta (Y^C)+(Xf)(Yf)\eta (Z^C)\right\}_{(x,\omega )} .
\end{array}
\end{equation}
By direct calculations we verify that for $\widetilde F^\prime $, $\widetilde F^{''}$ and $\widetilde F^{'''}$ the conditions \eqref{4.104}, \eqref{4.105} and \eqref{4.110} hold, respectively.  
\par
(i) The assumption that $M$ is flat implies $\widetilde F^\prime =0$. If ${\rm dim}M=2$, then 
${\rm dim}\widetilde H_t=3$ and from \propref{Proposition B} it follows that $\widetilde F^\prime $ vanishes too. Hence, $\widetilde F=\widetilde F^{''}+\widetilde F^{'''}$ which means that $\widetilde H_t$ belongs to the class $\mathbb{G}_5\oplus \mathbb{G}_{10}$.
\par
(ii) In the case when $M$ is not flat and ${\rm dim}M>2$ we have $\widetilde F=\widetilde F^\prime 
+\widetilde F^{''}+\widetilde F^{'''}$. Therefore $\widetilde H_t \in \mathbb{G}_4\oplus \mathbb{G}_5\oplus \mathbb{G}_{10}$.
\par
According to the assertion (i) from \thmref{Theorem A}, $\widetilde H_t$ is paracontact metric in both cases (i) and (ii) if and only if $\mathbb{G}_5=\overline {\mathbb{G}}_5$. From \eqref{4.26} we find $\displaystyle \theta _{\widetilde F^{''}}(\overline \xi )=-\frac{(n-1)\sqrt{b}}{a}$. Taking into account  the definition of $\overline {\mathbb{G}}_5$ in the case when $\phi (X,Y)=g(X,\varphi Y)$, we conclude that $\widetilde F^{''}$ satisfies the characteristic condition of the class $\overline {\mathbb{G}}_5$ if and only if 
$\displaystyle -\frac{(n-1)\sqrt{b}}{a}=-2(n-1)$. The last equality is equivalent to $b=4a^2$, which completes the proof.
\end{proof}
Now, we consider the function $\bar f: T^*M\longrightarrow  \mathbb{R}$ defined in \cite{B2} by
\[
\bar f=\xi ^V ,
\]
or equivalently by $\bar f(x,\omega )=\omega _x(\xi _x)$ for any $(x,\omega )\in T^*M$.
\par
Let 
\[
H_t=\bar f^{-1}(t)=\{(x,\omega )\in T^*M : \bar f(x,\omega )=t, \, t\in \mathbb{R}\setminus \{0\}\}
\]
be the hypersurfaces level set in $T^*M$, endowed with the restriction $g$  of the proper natural Riemann extension $\overline g$ on $T^*M$.
\par
We note that the hypersurfaces level set $H_t$ in $T^*M$ defined in \cite{B2} is a particular case from the set $\widetilde H_t$ which is obtained by $f=const$. In \cite{B2} it is shown that:
\par 
(1) At any point $(x,\omega )$ of $H_t$ the gradient of the function  $\bar f$ is a normal vector field to 
$H_t$ and it is given by
\[
{\rm grad}\bar f=\frac{1}{a}\left\{\xi ^C-\frac{b}{a}\xi ^VW\right\} .
\]
\par
(2) The restriction $g$ of $\overline g$ on $H_t$ is non-degenerate on $H_t$, i.e. 
$(H_t,g)$ is a semi-Riemannian hypersurface of $T^*M$.
\par
(3) The vertical lift $\alpha ^V$ of an 1-form $\alpha $ on $M$ and the complete lift $X^C$ of $X\in \chi (M)$ are tangent to $H_t$ if at any point $(x,\omega )\in H_t$ they satisfy the conditions:
\begin{equation}\label{4.28}
\alpha _x(\xi _x)=0 , \qquad  \omega _x((\nabla _\xi X)_x)=0  .
\end{equation}
We remark that the above three results are immediate consequences from \thmref{Theorem 4.2}.
\par
From \eqref{4.11} we obtain that by $b>0$ the vector field $N$ given by 
\begin{equation}\label{4.29}
N=\frac{1}{\sqrt{b}\xi ^V}\left\{\xi ^C-\frac{b}{a}\xi ^VW\right\} 
\end{equation}
is a time-like unit normal vector field to $H_t$. We endow the hypersurface $H_t$ of $(T^*M,P,\overline g)$
with the almost paracontact metric structure defined by \eqref{4.1}.  By using \eqref{4.12} we get:
\begin{equation}\label{4.30}
\begin{array}{lll}
\displaystyle \overline \xi =\frac{1}{\sqrt{b}\xi ^V}\xi ^C, \qquad \eta (X^C)=\sqrt{b}X^V,  \qquad 
\eta (\alpha ^V)=0 \\ \\
\displaystyle \varphi X^C=X^C+2C(\nabla X)-\frac{X^V}{\xi ^V}\xi ^C , \qquad \varphi \alpha ^V=-\alpha ^V .
\end{array}
\end{equation}
\begin{thm}\label{Theorem 4.4}
For the $(2n-1)$-dimensional almost paracontact metric manifold $(H_t,\varphi ,\overline \xi ,\eta ,g)$ of 
$(T^*M,P,\overline g)$ with a time-like unit normal vector field $N$ and an almost paracontact metric structure given by \eqref{4.29}  and \eqref{4.30}, respectively, we have:
\par
(i) If $M$ is flat or ${\rm dim}M=2$, then $H_t \in \mathbb{G}_5$ and hence $H_t$ is quasi-para-Sasakian. In this case $H_t$ is para-Sasakian if and only if $b=4a^2$.
\par
(ii) If $M$ is not flat and ${\rm dim}M>2$, then $H_t \in \mathbb{G}_4\oplus \mathbb{G}_5$. In this case $H_t$ is K-paracontact metric if and only if $b=4a^2$.
\end{thm}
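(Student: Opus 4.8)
The plan is to obtain \thmref{Theorem 4.4} as the specialization $f=\const$ of \thmref{Theorem 4.3}, since $H_t$ is exactly the hypersurface $\widetilde H_t$ in that case. First I would note that taking $f$ constant annihilates every $\mathrm{d}f$-term: the tangency conditions \eqref{4.10} reduce to \eqref{4.28}, the time-like unit normal \eqref{4.11} reduces to \eqref{4.29}, and the induced structure tensors \eqref{4.12} reduce to \eqref{4.30}. Correspondingly, in the shape-operator formulas \eqref{4.14}--\eqref{4.15} the last summand disappears, leaving
\[
A_N X^C=-\frac{1}{\sqrt{b}\,\xi^V}C(R(.,\xi)X)+\frac{\sqrt{b}}{2a}\{X^C+\eta(X^C)\overline\xi\},\qquad A_N\alpha^V=\frac{\sqrt{b}}{2a}\alpha^V .
\]
Consequently the third summand $\widetilde F^{'''}$ in the decomposition \eqref{4.24}, being built entirely out of the factors $(Xf),(Yf),(Zf)$, vanishes identically, so on $H_t$ one is left with $\widetilde F=\widetilde F^{\prime}+\widetilde F^{''}$, where $\widetilde F^{\prime}$ obeys the $\mathbb{G}_4$-condition \eqref{4.104} and $\widetilde F^{''}$ the $\mathbb{G}_5$-condition \eqref{4.105}, exactly as in the proof of \thmref{Theorem 4.3}.

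For part (i): when $M$ is flat, $\widetilde F^{\prime}=0$ because it is assembled from the curvature $R$; when $\dim M=2$, $\dim H_t=3$ and \propref{Proposition B} rules out a nonzero $\mathbb{G}_4$-component, so again $\widetilde F^{\prime}=0$. In both cases $\widetilde F=\widetilde F^{''}$, and since $\theta_{\widetilde F^{''}}(\overline\xi)=-(n-1)\sqrt{b}/a\neq 0$ the $\mathbb{G}_5$-part is genuinely non-trivial, whence $H_t\in\mathbb{G}_5$ (and not the null class). Then \thmref{Theorem A}(iv) gives at once that $H_t$ is quasi-para-Sasakian, while \thmref{Theorem A}(ii) says $H_t$ is para-Sasakian if and only if $\mathbb{G}_5=\overline{\mathbb{G}}_5$; comparing $\theta_{\widetilde F^{''}}(\overline\xi)$ with the normalization in the definition of $\overline{\mathbb{G}}_5$ (case $\phi(X,Y)=g(X,\varphi Y)$), as in the final paragraph of the proof of \thmref{Theorem 4.3}, forces $-(n-1)\sqrt{b}/a=-2(n-1)$, i.e.\ $b=4a^2$.

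For part (ii): when $M$ is not flat and $\dim M>2$, the term $\widetilde F^{\prime}$ need not vanish, so $\widetilde F=\widetilde F^{\prime}+\widetilde F^{''}$ and hence $H_t\in\mathbb{G}_4\oplus\mathbb{G}_5$. By \thmref{Theorem A}(iii), $H_t$ is K-paracontact metric if and only if it lies in $\overline{\mathbb{G}}_5$ or in $\overline{\mathbb{G}}_5\oplus\mathbb{G}_4$; since a $\mathbb{G}_4$-summand is permitted, this is equivalent to the $\mathbb{G}_5$-component being $\overline{\mathbb{G}}_5$, i.e.\ again to $b=4a^2$.

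Almost everything here is bookkeeping inherited from \thmref{Theorem 4.3}. The only delicate points — and the place where I would slow down — are verifying that the class membership is sharp, that is, that $\widetilde F^{''}\neq 0$ (so $H_t$ genuinely sits in $\mathbb{G}_5$ rather than in the trivial, $\widetilde F=0$ class), and keeping the sign and dimension normalization of $\overline{\mathbb{G}}_5$ straight in the two "if and only if" assertions. I do not expect a substantive obstacle beyond these.
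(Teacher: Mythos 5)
Your proposal is correct and follows essentially the same route as the paper: specialize the decomposition $\widetilde F=\widetilde F^{\prime}+\widetilde F^{''}+\widetilde F^{'''}$ from the proof of \thmref{Theorem 4.3} to $f=\const$ (so $\widetilde F^{'''}=0$), identify the surviving summands with the classes $\mathbb{G}_4$ and $\mathbb{G}_5$, and invoke \propref{Proposition B} and \thmref{Theorem A} together with the computation $\theta_{\widetilde F^{''}}(\overline\xi)=-(n-1)\sqrt{b}/a$ for the $b=4a^2$ criteria. The only cosmetic difference is that the paper additionally observes that \eqref{4.28} forces $\nabla_\xi X=0$ and hence $R(\cdot,\xi)X=0$ for tangential $X^C$, which simplifies $\widetilde F^{\prime}$ to $2a\,\omega(R(Z,Y)X)$ before checking \eqref{4.104}; your argument bypasses this harmlessly since the general $\widetilde F^{\prime}$ was already shown to satisfy the $\mathbb{G}_4$ condition.
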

\begin{proof}
We find the tensor field $\widetilde F$ of $H_t$ by using \eqref{4.24}, \eqref{4.25}, \eqref{4.26} and
\eqref{4.27}, taking into account that $f=const$. For arbitrary $X^C\in \chi(H_t)$ the equality
\eqref{4.28} implies $\nabla _\xi X=0, \, X\in \chi (M)$. From the last equality and $\nabla \xi =0$ it follows that $R(Z,\xi )X=R(X,\xi )Z=0$, \, $X, \xi ,Z \in \chi (M)$. Then the tensor field $\widetilde F^\prime $, defined by \eqref{4.25}, becomes
\begin{equation}\label{4.31}
\widetilde F^\prime _{(x,\omega )}(X^C+\alpha ^V,Y^C+\beta ^V,Z^C+\gamma ^V)=2a\omega _x (R_x(Z,Y)X) .
\end{equation}
One can easily check that $\widetilde F^\prime $ given by \eqref{4.31} satisfies \eqref{4.104}. Since $f=const$ the tensor field $\widetilde F^{'''}$, defined by \eqref{4.27}, vanishes. Consequently, for the tensor field $\widetilde F$ of $H_t$ we have
\begin{equation}\label{4.32}
\begin{array}{ll}
\widetilde F_{(x,\omega )}(X^C+\alpha ^V,Y^C+\beta ^V,Z^C+\gamma ^V) \\
=\left(\widetilde F^\prime +\widetilde F^{''}\right)_{(x,\omega )}(X^C+\alpha ^V,Y^C+\beta ^V,Z^C+\gamma ^V) ,
\end{array}
\end{equation}
where $\widetilde F^\prime $ and $\widetilde F^{''}$ are determined by \eqref{4.31} and \eqref{4.26}, respectively.
\par
(i) Let us assume that $M$ is flat or ${\rm dim}M=2$. Then $\widetilde F^\prime =0$ and from \eqref{4.32} we obtain that $H_t \in \mathbb{G}_5$. Hence, according to the assertion (iv) from \thmref{Theorem A}, $H_t$ is quasi-para-Sasakian. Applying the assertion (ii) from \thmref{Theorem A} we conclude that $H_t$ is para-Sasakian if and only if $\mathbb{G}_5=\overline {\mathbb{G}}_5$. Analogously as in \thmref{Theorem 4.3} we establish that it is equivalent to $b=4a^2$.
\par
(ii) In the case when $M$ is not flat and ${\rm dim}M>2$ the equality \eqref{4.32} holds which means that $H_t \in \mathbb{G}_4\oplus \mathbb{G}_5$. By using the assertion (iii) from \thmref{Theorem A} we complete the proof.
\end{proof}

\end{document}